\documentclass[11pt,reqno]{amsart}

\usepackage[T1]{fontenc}
\usepackage[utf8]{inputenc}
\usepackage[a4paper,margin=29mm]{geometry}
\usepackage{lmodern}
\usepackage{microtype}
\microtypesetup{expansion=false}
\usepackage{amsmath,amssymb,amsthm}
\usepackage{mathrsfs}
\usepackage{enumitem}
\usepackage{xcolor}
\usepackage{aliascnt}
\usepackage{hyperref}
\usepackage[nameinlink,capitalize,noabbrev]{cleveref}

\hypersetup{
  pdftitle={E1-Degeneration for Irregular Hodge Filtrations on Deligne--Mumford Stacks},
  pdfauthor={Haoxu Wang},
  colorlinks=true,
  linkcolor=blue!55!black,
  citecolor=green!40!black,
  urlcolor=blue!65!black
}

\numberwithin{equation}{section}

\newtheorem{theorem}{Theorem}[section]

\newaliascnt{proposition}{theorem}
\newtheorem{proposition}[proposition]{Proposition}
\aliascntresetthe{proposition}

\newaliascnt{lemma}{theorem}
\newtheorem{lemma}[lemma]{Lemma}
\aliascntresetthe{lemma}

\newaliascnt{corollary}{theorem}
\newtheorem{corollary}[corollary]{Corollary}
\aliascntresetthe{corollary}

\theoremstyle{definition}
\newaliascnt{definition}{theorem}
\newtheorem{definition}[definition]{Definition}
\aliascntresetthe{definition}

\theoremstyle{remark}
\newaliascnt{remark}{theorem}

\aliascntresetthe{remark}

\crefname{proposition}{Proposition}{Propositions}
\Crefname{proposition}{Proposition}{Propositions}
\crefname{lemma}{Lemma}{Lemmas}
\Crefname{lemma}{Lemma}{Lemmas}
\crefname{corollary}{Corollary}{Corollaries}
\Crefname{corollary}{Corollary}{Corollaries}
\crefname{definition}{Definition}{Definitions}
\Crefname{definition}{Definition}{Definitions}
\crefname{remark}{Remark}{Remarks}
\Crefname{remark}{Remark}{Remarks}

\DeclareMathOperator{\Tr}{Tr}

\newcommand{\A}{\mathbb A}
\newcommand{\C}{\mathbb C}
\newcommand{\N}{\mathbb N}
\newcommand{\Pj}{\mathbb P}
\newcommand{\Q}{\mathbb Q}
\newcommand{\Z}{\mathbb Z}

\newcommand{\cK}{\mathcal K}
\newcommand{\cL}{\mathcal L}
\newcommand{\cO}{\mathcal O}
\newcommand{\bH}{\mathbb H}

\newcommand{\FYu}{F_{\mathrm{Yu}}}
\newcommand{\red}{\mathrm{red}}
\newcommand{\id}{\mathrm{id}}

\newcommand{\Companion}{\cite{WangCompactification}}
 
\title[Irregular Hodge Filtrations on Deligne--Mumford Stacks]
{\(E_1\)-Degeneration for Irregular Hodge Filtrations\\
on Deligne--Mumford Stacks}
\author{Haoxu Wang}
\address{Morningside Center of Mathematics, Academy of Mathematics and
Systems Science, Chinese Academy of Sciences, Beijing 100190, China}
\email{krassotkinkolya@gmail.com}
\subjclass[2020]{Primary 14F40; Secondary 14A20, 14E05, 14J33}
\keywords{irregular Hodge filtration, Deligne--Mumford stacks,
\(E_1\)-degeneration, finite flat descent, Kontsevich complexes}
\date{}

\begin{document}

\begin{abstract}
Let \((\mathscr U,w)\) be a smooth separated Deligne--Mumford stack
over \(\C\).  Assume that its coarse space is quasi-projective.  We
prove \(E_1\)-degeneration at every rational index for its irregular
Hodge filtration.  The result holds on every good stack
compactification and every NC rational stack compactification.  In
particular, the filtration and its graded dimensions can be computed
on any such compactification.  The proof uses finite flat descent for
Kontsevich complexes.  It reduces the projective stack case to the
theorem of Esnault--Sabbah--Yu for smooth projective varieties.
\end{abstract}

\maketitle
\setcounter{tocdepth}{1}
\tableofcontents

\section{Introduction}
\label{sec:introduction}

\subsection{Background and main results}

Let \(\mathscr U\) be a smooth separated Deligne--Mumford stack of
finite type over \(\C\), and let
\[
 w:\mathscr U\longrightarrow\A^1
\]
be a regular function.  Its twisted de Rham cohomology is
\[
 H_{\mathrm{dR}}^k(\mathscr U,w)
 :=
 \bH^k\!\left(
  \mathscr U,(\Omega_{\mathscr U}^\bullet,d+dw\wedge)
 \right).
\]
For smooth varieties, Yu defined an irregular Hodge filtration
\cite{Yu}, and Esnault, Sabbah, and Yu proved \(E_1\)-degeneration
\cite{ESY}.  Chen and Yu treated nondegenerate rational
compactifications \cite{ChenYu}.  Harder and Lee use the sectorwise
stack version in mirror symmetry \cite{HarderLee}.

The companion paper \Companion{} extends the compactification theory
to Deligne--Mumford stacks.  It compares the filtered complexes from
different compactifications.  It also proves that the induced
filtration is independent of the compactification.  These results do
not imply \(E_1\)-degeneration.  The purpose of this paper is to prove
this remaining statement.

We first consider a projective good stack compactification
\((\mathscr X,\boldsymbol{\mathscr D},\bar w)\).  Thus \(\mathscr X\)
is smooth and proper, its coarse space is projective,
\(\boldsymbol{\mathscr D}=\{\mathscr D_i\}_{i\in I}\) is a labelled
SNC boundary, and \(\bar w:\mathscr X\to\Pj^1\) extends \(w\).  Put
\[
 \mathscr P=\bar w^*(\infty),
 \qquad
 \mathcal L_{\mathscr X}^a(c)
 =\Omega_{\mathscr X}^a(\log\mathscr D)
   \bigl(\lfloor c\mathscr P\rfloor\bigr),
 \quad c\in\Q.
\]
For \(\alpha\in\Q\cap[0,1)\), let
\(\Omega_{\mathscr X,\bar w}^a(\alpha)\) be the Kontsevich lattice
defined from these rounded lattices.  Put
\[
 K_{\mathscr X,\bar w}(\alpha)
 =
 \bigl(
  \Omega_{\mathscr X,\bar w}^\bullet(\alpha),
  d+d\bar w\wedge
 \bigr).
\]
We use its decreasing stupid filtration.

\begin{theorem}[Projective stack degeneration]
\label{thm:intro-projective}
For every projective good stack compactification and every
\(\alpha\in\Q\cap[0,1)\), the spectral sequence
\begin{equation}
 E_1^{p,q}
 =
 H^q\!\left(
  \mathscr X,\Omega_{\mathscr X,\bar w}^p(\alpha)
 \right)
 \Longrightarrow
 \bH^{p+q}\!\left(
  \mathscr X,K_{\mathscr X,\bar w}(\alpha)
 \right)
 \label{eq:intro-kontsevich-ss}
\end{equation}
degenerates at \(E_1\).
\end{theorem}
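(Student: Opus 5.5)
The plan is to reduce to the Esnault--Sabbah--Yu theorem for smooth projective varieties by finite flat descent, as the abstract indicates. Since \(\mathscr X\) is a smooth proper Deligne--Mumford stack with projective coarse space, I will use the Kresch--Vistoli type result: there is a smooth projective variety \(Y\) with a finite flat surjection \(\pi:Y\to\mathscr X\). It may be obtained from a finite flat cover by a scheme, followed by resolution and Kawamata-type covering tricks, if necessary. I will arrange that \(\pi^{-1}(\mathscr D)\cup\pi^{-1}(\mathscr P)_{\red}\) is SNC on \(Y\), so that \((Y,\bar w\circ\pi)\) is a good compactification in the sense of Yu. The statement does not change if we refine \(\pi\) further by finite flat maps, or base change it, so I will freely shrink \(Y\) to achieve these conditions.

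Next I will compare Kontsevich complexes. I will construct a morphism of filtered complexes
\[
\pi^*:K_{\mathscr X,\bar w}(\alpha)\longrightarrow \pi_*K_{Y,\bar w\pi}(\alpha'),
\]
where \(\alpha'\) is the index matching the ramification of \(\pi\) along \(\mathscr P\). Pullback of log forms respects the rounded lattices \(\lfloor c\mathscr P\rfloor\), since \(\pi^*\mathscr P\) has multiplicities multiplied by the ramification indices. This compatibility is what lets the Kontsevich lattice condition \(d\bar w\wedge\) pass through \(\pi^*\). I will also use the normalized trace \(\frac{1}{\deg\pi}\Tr:\pi_*\Omega_Y^a(\log)\to\Omega_{\mathscr X}^a(\log)\), which exists because \(\pi\) is finite flat between smooth stacks. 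The trace commutes with \(d\) and with \(d\bar w\wedge\), because \(\bar w\circ\pi\) is pulled back and the trace is linear over pulled-back forms. The key local claim is that \(\Tr\) maps \(\pi_*\Omega_{Y,\bar w\pi}^a(\alpha')\) into \(\Omega_{\mathscr X,\bar w}^a(\alpha)\). Then \(\frac{1}{\deg\pi}\Tr\circ\pi^*=\id\), so \(K_{\mathscr X,\bar w}(\alpha)\) is a filtered direct summand of \(\pi_*K_{Y}\) termwise.

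Degeneration then follows formally. Because \(\pi\) is finite, \(\pi_*\) is exact on coherent sheaves, so the spectral sequence for \(\pi_*K_Y\) on \(\mathscr X\) is the one for \(K_Y\) on \(Y\). That spectral sequence degenerates at \(E_1\) by Esnault--Sabbah--Yu, in the form for the Kontsevich complexes \(\Omega^\bullet_f(\alpha)\) at every rational \(\alpha\). The stupid-filtration spectral sequence of a filtered direct summand is a direct summand of the spectral sequence, with all differentials compatible. Hence all \(d_r\) with \(r\ge1\) vanish on \(\mathscr X\). Coherent cohomology on \(\mathscr X\) is computed on the stack, and it agrees with the coarse space after pushforward in characteristic \(0\), so no extra issues arise.

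The main obstacle will be the local trace statement. I have to check, in étale local coordinates where \(\pi\) is a product of Kummer covers \(z_i=t_i^{e_i}\) composed with étale maps, that the trace sends the rounded lattices \(\Omega^a_Y(\log)(\lfloor c\,\pi^*\mathscr P\rfloor)\) into \(\Omega^a_{\mathscr X}(\log)(\lfloor c\mathscr P\rfloor)\). I will also have to check that the Kontsevich condition (that \(d\bar w\wedge\omega\) lies in the next lattice) is preserved. I will try the monomial computation first: the trace kills monomials \(t^m\) unless \(e_i\mid m_i\), and the surviving monomials have exponents that round correctly. The choice of \(\alpha'\) will be \(\alpha\) itself when the ramification along \(\mathscr P\) is handled by the floor functions uniformly. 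If that fails, I will instead choose \(Y\) unramified over the generic points of \(\mathscr P\), using the flexibility in constructing the cover.
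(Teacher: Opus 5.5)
Your overall architecture matches the paper's. You use a finite flat cover by a smooth projective scheme, and pullback plus normalized trace give a filtered retraction of Kontsevich complexes; the choice \(\alpha'=\alpha\) works because \(e\lfloor cm\rfloor\le\lfloor cem\rfloor\). Then Esnault--Sabbah--Yu holds on \(Y\), and \(d_s^{\mathscr X}=r_s\,d_s^Y\,i_s\). However, the step that makes Esnault--Sabbah--Yu applicable is never achieved.

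\textbf{The cover with SNC boundary.} You need \(\pi\) finite flat \emph{and} \((\pi^{-1}\mathscr D)_{\red}\) SNC on \(Y\); otherwise \(K_{Y,g}(\alpha)\) is not even defined. None of your proposed mechanisms delivers this.
\begin{enumerate}
\item Resolving \(Y\), or the pair \((Y,\pi^{-1}\mathscr D)\), destroys finiteness and flatness. With them you lose exactness of \(\pi_*\), the finite flat trace, and \(\Tr\circ\pi^*=\deg(\pi)\,\id\). You would then need a trace on \(R\pi_*\) of twisted logarithmic complexes for a generically finite map, which is a different and harder problem.
\item Kawamata-type covers start from a scheme that already carries an SNC divisor. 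Further finite flat covers do not repair a preimage that fails to be normal crossing: three branches through a point remain three branches through every point above it.
\item A proper \(Y\) cannot be shrunk while keeping \(\pi\) finite and surjective.
\end{enumerate}
The missing idea is to build transversality into the Kresch--Vistoli construction itself. Take their smooth projective morphism of relative dimension \(r\) over \(\mathscr X\). Its open part \(Q\) is a scheme, smooth over \(\mathscr X\), with complement of dimension \(<r\). Cut by \(r\) general hypersurfaces, using a relative Bertini lemma on the coarse spaces. At each step it keeps the fibres of the correct dimension, shrinks the bad locus so that it is eventually empty, and stays transverse to every smooth \(Q\times_{\mathscr X}\mathscr D_J\). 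Then every \(Y\times_{\mathscr X}\mathscr D_J\) is smooth, and flat pullback gives it codimension \(|J|\). So the \(D_i=Y\times_{\mathscr X}\mathscr D_i\) form a labelled SNC divisor. This also makes each \(\pi^*\mathscr D_i\) reduced. Your fallback, ``unramified over the generic points of \(\mathscr P\)'', is therefore a consequence of this construction, not of unspecified flexibility.

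\textbf{The local trace.} Your plan rests on a local structure you do not have. A general finite flat cover is not \'etale locally a product of Kummer covers composed with \'etale maps. It can ramify along non-boundary divisors that meet \(D_Y\), with non-toroidal branch behaviour. The paper avoids any local model:
\begin{enumerate}
\item The target \(\cL^a_{\mathscr X}(c)\) is locally free on a regular chart, so containment need only be checked at codimension-one points.
\item Off the boundary, the finite syntomic trace preserves regular forms.
\item At a boundary divisor with local ring \(A\), strict henselization splits the semilocal ring \(C\) into Kummer DVRs \(A^{\mathrm{sh}}[t]/(t^e-x)\). There \(\Omega^a(\log t)\) is the base change of \(\Omega^a_{A^{\mathrm{sh}}}(\log x)\), and \(\Tr(t^{-k}B)=x^{-\lfloor k/e\rfloor}A^{\mathrm{sh}}\). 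Together with \(\lfloor\lfloor cem\rfloor/e\rfloor=\lfloor cm\rfloor\), this gives the bound.
\end{enumerate}
Preservation of the Kontsevich lattices then follows formally, because pullback and trace are compatible with \(\nabla\).
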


The companion comparison with a projective good model gives the
following rational form.

\begin{theorem}[Rational degeneration]
\label{thm:intro-rational}
Assume that the coarse moduli space of \(\mathscr U\) is
quasi-projective.  Let
\((\mathscr X,\mathscr D,\bar w)\) be an NC rational stack
compactification.  This means the following \textup{(\cite[Definition~2.3]{WangCompactification})}:
\begin{enumerate}
\item
The stack \(\mathscr X\)
is smooth and proper,

\item
\(\mathscr D=(\mathscr X\setminus\mathscr U)_{\red}\) is NC, 

\item and \(\bar w:\mathscr X\dashrightarrow\Pj^1\) restricts to \(w\) on
\(\mathscr U\).  

\item On every component where \(\bar w\not\equiv0\), write
\(\operatorname{div}(\bar w)=\mathscr Z-\mathscr P\), where
\(\mathscr Z\) and \(\mathscr P\) are effective and have no common prime
component; on a zero component, put \(\mathscr Z=\mathscr P=0\).  Then
\(|\mathscr P|\subseteq|\mathscr D|\).  

\item Moreover, on a neighbourhood
\(V\) of \(|\mathscr P|\), the divisor \(\mathscr Z|_V\) is empty or
smooth, and \(\mathscr Z|_V+\mathscr D|_V\) is reduced NC.  
\end{enumerate}
For every
\(\alpha\in\Q\cap[0,1)\), its fixed-\(\alpha\) Kontsevich spectral
sequence degenerates at \(E_1\).  Its Yu integer-slice spectral
sequences also degenerate at \(E_1\).
\end{theorem}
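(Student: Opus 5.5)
We reduce \cref{thm:intro-rational} to \cref{thm:intro-projective} using the comparison results of the companion paper \Companion{}. The argument is a dimension count, not a new degeneration argument.

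First, since the coarse space of \(\mathscr U\) is quasi-projective, we produce a projective good stack compactification \((\mathscr X',\boldsymbol{\mathscr D}',\bar w')\) of \((\mathscr U,w)\). Concretely, compactify the coarse space projectively and resolve the stack. Then use stacky resolution and destackification-type modifications of the indeterminacy of \(\bar w\) and of the boundary, as in \Companion{}, so that the boundary becomes SNC and \(\bar w'\) becomes a morphism to \(\Pj^1\). By \cref{thm:intro-projective}, the fixed-\(\alpha\) Kontsevich spectral sequence on \(\mathscr X'\) degenerates at \(E_1\) for every \(\alpha\in\Q\cap[0,1)\). Equivalently,
\[
\sum_{p+q=k}\dim H^q\!\bigl(\mathscr X',\Omega^p_{\mathscr X',\bar w'}(\alpha)\bigr)=\dim H^k_{\mathrm{dR}}(\mathscr U,w)\quad\text{for all }k.
\]
Here we use the companion quasi-isomorphism \(K_{\mathscr X',\bar w'}(\alpha)\simeq R j_*(\Omega^\bullet_{\mathscr U},d+dw)\) on hypercohomology.

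Second, take an arbitrary NC rational stack compactification \((\mathscr X,\mathscr D,\bar w)\). The companion paper compares the filtered Kontsevich complexes of different compactifications; this is the step that proves independence of the induced filtration. We invoke that comparison in its strong form: the graded pieces agree, so that
\[
H^q\!\bigl(\mathscr X,\Omega^p_{\mathscr X,\bar w}(\alpha)\bigr)\cong H^q\!\bigl(\mathscr X',\Omega^p_{\mathscr X',\bar w'}(\alpha)\bigr).
\]
This would follow by dominating both compactifications by a common modification and checking \(R\pi_*\Omega^p_{\widetilde{\mathscr X},\tilde w}(\alpha)\simeq\Omega^p_{\mathscr X,\bar w}(\alpha)\) for the blow-downs. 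The companion's NC rational comparison (Definition~2.3 and the results following it) provides exactly this type of statement, including at NC (not only SNC) boundary points via étale-local computation.

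Given these isomorphisms, the \(E_1\) terms on \(\mathscr X\) have total dimension \(\dim H^k_{\mathrm{dR}}(\mathscr U,w)\) in each total degree \(k\). Its abutment is the same space, again by the companion quasi-isomorphism. All differentials \(d_r\) for \(r\ge1\) must therefore vanish, since \(\dim E_\infty\le\dim E_1\) with equality only if every \(d_r\) is zero. This proves fixed-\(\alpha\) degeneration.

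For Yu's integer-slice spectral sequences, we use the standard description of Yu's filtration \(\FYu^{\lambda}\), \(\lambda=\alpha-p\), through the Kontsevich complexes at index \(\alpha\). This description gives a filtered quasi-isomorphism between the slice complex for \(\lambda+\Z\) and \(K_{\mathscr X,\bar w}(\alpha)\) with its stupid filtration, up to a shift, as in \cite{ESY} and \cite{ChenYu}. We expect the companion paper to establish this on stacks étale-locally, where it is a local statement. Degeneration then transfers.

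The main obstacle is the graded comparison in the second step on genuinely NC (non-SNC) or rational boundaries. The companion paper may only state independence of the induced filtration on hypercohomology, not isomorphism of \(E_1\) terms. If so, we instead run the dimension argument directly on \(\mathscr X\). We would first prove \(E_1\)-degeneration on \(\mathscr X\) by passing to a finite flat cover by a scheme-like projective good model, using finite flat descent as in the proof of \cref{thm:intro-projective}. We would then apply the numerical criterion: \(\sum_{p+q=k}\dim E_1^{p,q}\) is at least \(\dim\) of the abutment, and the two agree by the companion's equality of graded dimensions of the independent filtration.
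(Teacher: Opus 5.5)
Your proposal is correct and follows the paper's route: a projective good model from the companion (\cref{thm:projective-anchor}), degeneration there by \cref{thm:intro-projective}, and transfer to \(\mathscr X\) through the companion's isomorphisms on all graded pieces (\cref{thm:compactification-comparison}). Your dimension count stands in for the paper's direct identification of spectral sequences from \(E_1\); for the Yu slices the paper applies \cref{thm:kontsevich-yu} on the projective good model and then the Yu part of \cref{thm:compactification-comparison}, so it never needs a Kontsevich--Yu comparison on the NC rational model itself, which your route assumes. Your fallback paragraph is unnecessary, since the strong graded comparison is exactly what the companion supplies, and it would not work as written, because \cref{thm:finite-descent} and \cref{thm:projective-adapted-cover} require a good compactification (labelled SNC boundary, \(\bar w\) a morphism) with projective coarse space, which an NC rational \(\mathscr X\) need not be.
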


The first theorem is the main new result.  The second follows from it
and the companion comparison.  All indices in this paper are rational.

\subsection{The proof}

An arbitrary scheme atlas of a proper stack need not be proper.  We
instead construct a finite flat surjective lci map
\[
 q:Y\longrightarrow\mathscr X
\]
from a smooth projective scheme.  A relative Bertini refinement of
Kresch--Vistoli \cite{KreschVistoli} also makes
\(D_Y=(q^{-1}\mathscr D)_{\red}\) SNC.

Put \(P_Y=q^*\mathscr P\).  The main technical result is that, for
every \(a\geq0\) and \(c\in\Q\), pullback and trace preserve the
rounded logarithmic lattices:
\begin{align*}
 q^*&:
 \mathcal L_{\mathscr X}^a(c)
 \longrightarrow
 q_*\mathcal L_Y^a(c),\\
 \Tr_q^a&:
 q_*\mathcal L_Y^a(c)
 \longrightarrow
 \mathcal L_{\mathscr X}^a(c).
\end{align*}
For pullback, the key inequality is
\(e\lfloor cm\rfloor\leq\lfloor cem\rfloor\).  For trace, we work at
codimension-one points.  Strict henselization reduces the claim to a
direct calculation for a tame Kummer extension of DVRs.

Pullback and trace then restrict to the Kontsevich complexes, and
\(\Tr_q\circ q^*=\deg(q)\,\id\).  Thus normalized trace makes the
filtered complex on \(\mathscr X\) a retract of that on \(Y\).  The
theorem of Esnault--Sabbah--Yu gives \(E_1\)-degeneration on \(Y\).
Degeneration passes to the retract.  This proves
\cref{thm:intro-projective}.  The companion comparison then gives
\cref{thm:intro-rational}.
 \section{Preliminaries and input theorems}
\label{sec:setup}

All stacks are algebraic stacks of finite type over \(\C\).  Every
Deligne--Mumford stack is separated.  Cohomology is taken on the
lisse--\'etale site.  A finite map to a stack is always
representable.  All filtration indices are rational.

\subsection{Boundary data and compactifications}

\begin{definition}[Labelled SNC divisor]
\label{def:labelled-snc}
Let \(\mathscr X\) be smooth.  A labelled SNC divisor is a finite
family
\[
 \boldsymbol{\mathscr D}=\{\mathscr D_i\}_{i\in I}
\]
of distinct smooth effective Cartier divisors such that, for every
\(J\subseteq I\),
\[
 \mathscr D_J:=\bigcap_{j\in J}\mathscr D_j
\]
is empty or smooth of pure codimension \(|J|\).  We set
\(\mathscr D_\varnothing=\mathscr X\) and
\(\mathscr D=\bigcup_i\mathscr D_i\).
\end{definition}

This condition may be checked after a surjective \(\acute{e}\)tale
base change.

\begin{definition}[Good compactification]
\label{def:good-compactification}
A good stack compactification of \((\mathscr U,w)\) consists of a
smooth proper Deligne--Mumford stack \(\mathscr X\), an open immersion
\(\mathscr U\hookrightarrow\mathscr X\), a labelled SNC presentation
of \((\mathscr X\setminus\mathscr U)_{\red}\), and a morphism
\[
 \bar w:\mathscr X\longrightarrow\Pj^1
\]
that extends \(w\).  It is \emph{projective good} if the coarse moduli
space of \(\mathscr X\) is projective.
\end{definition}

\subsection{The Kontsevich and Yu complexes}

Write
\begin{equation}
 \mathscr P:=\bar w^*(\infty)
 =\sum_{i\in I}m_i\mathscr D_i,
 \qquad m_i\in\N.
 \label{eq:polar-divisor}
\end{equation}
Some \(m_i\) may be zero.

For \(c\in\Q\), set
\[
 \lfloor c\mathscr P\rfloor
 :=\sum_i\lfloor cm_i\rfloor\mathscr D_i.
\]
For \(a\geq0\), set
\begin{equation}
 \cL_{\mathscr X}^a(c)
 :=
 \Omega_{\mathscr X}^a(\log\mathscr D)
 \bigl(\lfloor c\mathscr P\rfloor\bigr).
 \label{eq:rounded-log-lattice}
\end{equation}
These sheaves are locally free and commute with \(\acute{e}\)tale
base change.  The exterior derivative preserves them:
\[
 d\cL_{\mathscr X}^a(c)
 \subseteq
 \cL_{\mathscr X}^{a+1}(c).
\]

Fix \(\alpha\in\Q\cap[0,1)\).  The Kontsevich lattice is
\begin{equation}
 \Omega_{\mathscr X,\bar w}^a(\alpha)
 :=
 \ker\!\left\{
  \cL_{\mathscr X}^a(\alpha)
  \xrightarrow{\ \nabla_{\bar w}\ }
  \frac{\Omega_{\mathscr X}^{a+1}(*\mathscr D)}
       {\cL_{\mathscr X}^{a+1}(\alpha)}
 \right\},
 \qquad
 \nabla_{\bar w}=d+d\bar w\wedge.
 \label{eq:kontsevich-lattice}
\end{equation}
By \cite[Proposition~3.5(i)]{WangCompactification}, the sheaf
\(\Omega_{\mathscr X,\bar w}^a(\alpha)\) is locally free of rank
\(\binom{\dim\mathscr X}{a}\), for every \(a\) and \(\alpha\in\Q\cap[0,1)\).  
These lattices form the bounded complex
\begin{equation}
 K_{\mathscr X,\bar w}(\alpha)
 :=
 \bigl(
  \Omega_{\mathscr X,\bar w}^\bullet(\alpha),
  \nabla_{\bar w}
 \bigr).
 \label{eq:kontsevich-complex}
\end{equation}
We use the decreasing stupid filtration \(\sigma_{\geq\bullet}\).
Its spectral sequence starts with
\begin{equation}
 E_1^{p,q}
 =H^q\!\left(
  \mathscr X,\Omega_{\mathscr X,\bar w}^p(\alpha)
 \right).
 \label{eq:kontsevich-e1}
\end{equation}

Put
\[
 \cK_{\mathscr X,\bar w}^\bullet
 :=
 \bigl(
  \Omega_{\mathscr X}^\bullet(*\mathscr D),
  \nabla_{\bar w}
 \bigr).
\]
For \(\lambda\in\Q\), the Yu filtration is
\begin{equation}
 \FYu^\lambda\cK_{\mathscr X,\bar w}^a
 =
 \begin{cases}
  0,&a<\lceil\lambda\rceil,\\[2pt]
  \Omega_{\mathscr X}^a(\log\mathscr D)
  \bigl(\lfloor(a-\lambda)\mathscr P\rfloor\bigr),
  &a\geq\lceil\lambda\rceil.
 \end{cases}
 \label{eq:yu-complex}
\end{equation}

\begin{theorem}[Kontsevich--Yu comparison,{\cite[Proposition~3.5]{WangCompactification}}]
\label{thm:kontsevich-yu}
For \(p\in\Z\) and \(\alpha\in\Q\cap[0,1)\), inclusion in the
meromorphic complex gives compatible quasi-isomorphisms
\begin{equation}
 \sigma_{\geq p}K_{\mathscr X,\bar w}(\alpha)
 \xrightarrow{\ \sim\ }
 \FYu^{p-\alpha}\cK_{\mathscr X,\bar w}^\bullet
 \label{eq:kontsevich-yu-comparison}
\end{equation}
and
\begin{equation}
 K_{\mathscr X,\bar w}(\alpha)
 \xrightarrow{\ \sim\ }
 \cK_{\mathscr X,\bar w}^\bullet.
 \label{eq:untruncated-comparison}
\end{equation}
For fixed \(\alpha\), the spectral sequence of
\((K(\alpha),\sigma_{\geq\bullet})\) therefore agrees from \(E_1\)
with the Yu spectral sequence for
\(F_\alpha^p:=\FYu^{p-\alpha}\).
\end{theorem}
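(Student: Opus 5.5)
The statement just above is the Kontsevich--Yu comparison, quoted from \cite[Proposition~3.5]{WangCompactification}. I would prove it étale locally, by reducing to the known statement for varieties. All sheaves involved, namely \(\cL^a_{\mathscr X}(c)\), the Kontsevich lattices and the Yu pieces, are defined by local formulas in \(\Omega^\bullet(\log\mathscr D)\), \(\lfloor c\mathscr P\rfloor\) and \(d\bar w\). They therefore commute with étale base change. Whether a map of complexes of sheaves is a quasi-isomorphism can be checked on an étale atlas \(V\to\mathscr X\) by a scheme. So it suffices to treat a smooth scheme \(V\) with an SNC divisor and a map \(\bar w:V\to\Pj^1\) whose polar divisor is supported on the boundary. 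Compatibility of the maps \eqref{eq:kontsevich-yu-comparison} for varying \(p\), and with \eqref{eq:untruncated-comparison}, is automatic, since all of them are inclusions into \(\cK^\bullet_{\mathscr X,\bar w}\).

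First I check that the map is defined. In degree \(a\ge p\), an element of \(\Omega^a_{\bar w}(\alpha)\) lies in \(\cL^a(\alpha)\). Since \(a-(p-\alpha)\ge\alpha\), this sheaf is contained in \(\cL^a(a-p+\alpha)=\FYu^{p-\alpha}\). In degrees \(a<p\) the truncation is zero. The differential is compatible because \(\nabla_{\bar w}\) carries \(\Omega^a_{\bar w}(\alpha)\) into \(\cL^{a+1}(\alpha)\), and in fact into \(\Omega^{a+1}_{\bar w}(\alpha)\), since \(\nabla^2=0\).

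For the quasi-isomorphism I work in local coordinates \(x_1,\dots,x_n\) at a point of \(\mathscr P\). There \(\bar w=1/\prod x_i^{m_i}\) up to a unit, which after an étale refinement can be absorbed into a coordinate. Then \(d\bar w/\bar w=-\sum m_i\,dx_i/x_i\), and one filters by the order of pole. The Yu complex \(\FYu^{p-\alpha}\) is filtered by the sub-complexes \(\FYu^{\mu}\) for \(\mu\) increasing from \(p-\alpha\). The successive jumps occur at \(\mu=p-\alpha+k\) and at the rational breaks \(j/m_i\). On each graded piece the differential is, up to a lower-order term, wedging with \(d\bar w\). This operator is a Koszul-type differential on the restriction to the stratum \(\mathscr D_J\), and its cohomology vanishes except at the bottom piece. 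The bottom piece is exactly what the Kontsevich condition \(\nabla_{\bar w}\omega\in\cL^{a+1}(\alpha)\) retains. Acyclicity of the cokernel \(\FYu^{p-\alpha}/\sigma_{\ge p}K(\alpha)\) then follows by a dévissage over these graded pieces. The untruncated statement \eqref{eq:untruncated-comparison} is the case \(p\le 0\), combined with the exhaustion \(\bigcup_\lambda\) of \(\cK^\bullet\) by the \(\FYu^{\lambda}\). Equivalently, one shows that \(\FYu^{\lambda}\hookrightarrow\FYu^{\lambda-1}\) is a quasi-isomorphism for \(\lambda\le -\alpha+1\) and passes to the colimit.

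The spectral sequence statement is then formal. A filtered quasi-isomorphism induces an isomorphism of the spectral sequences of hypercohomology from \(E_1\) on, since the \(E_1\) terms are the hypercohomology of the graded pieces. The graded pieces are identified because the map is a quasi-isomorphism on each level.

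The main obstacle is the local acyclicity of the graded pieces in the dévissage. This is the Koszul computation in the presence of components with \(m_i=0\) and several breaks. I would import it from the scheme case, \cite{Yu} and \cite{ESY}, where exactly this comparison between the Kontsevich complex and the Yu filtration is proved. The stack case then follows purely by the étale descent step.
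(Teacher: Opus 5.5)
The paper gives no argument for this statement; it imports it from the companion paper. Your route is the natural one: reduce \'etale-locally to the scheme comparison of Yu and Esnault--Sabbah--Yu, whose proof is a local coordinate computation, so projectivity plays no role. Your check that the inclusion is defined, the compatibility of the maps, and the formal passage to spectral sequences are all fine.

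The gap is in your derivation of \eqref{eq:untruncated-comparison}. The paper explicitly allows \(m_i=0\). Along such a component, every Yu piece \(\FYu^\lambda\cK^a=\Omega^a_{\mathscr X}(\log\mathscr D)(\lfloor(a-\lambda)\mathscr P\rfloor)\) has only logarithmic poles. Hence
\[
 \bigcup_\lambda\FYu^\lambda\cK^\bullet=\Omega^\bullet_{\mathscr X}(\log\mathscr D)(*\mathscr P)\subsetneq\Omega^\bullet_{\mathscr X}(*\mathscr D)=\cK^\bullet .
\]
Your colimit argument therefore only yields \(K_{\mathscr X,\bar w}(\alpha)\simeq(\Omega^\bullet_{\mathscr X}(\log\mathscr D)(*\mathscr P),\nabla_{\bar w})\). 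Your local analysis ``at a point of \(\mathscr P\)'' cannot detect this. Away from \(|\mathscr P|\) the truncated comparison is an equality, but the untruncated one is not.

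You still need the twisted logarithmic comparison
\((\Omega^\bullet(\log\mathscr D)(*\mathscr P),\nabla_{\bar w})\hookrightarrow(\Omega^\bullet(*\mathscr D),\nabla_{\bar w})\). It is standard:
\begin{enumerate}
\item Filter by pole order \(k\) along a component \(\{x_1=0\}\) with \(m_1=0\).
\item With \(\xi=x_1\partial_{x_1}\), one has \(\nabla_{\bar w}\iota_\xi+\iota_\xi\nabla_{\bar w}=L_\xi+\xi(\bar w)\).
\item Since \(\bar w\) has no pole along \(\{x_1=0\}\), this operator acts as \(-k\) on the \(k\)-th graded piece, which is therefore contractible for \(k\geq1\).
\end{enumerate}
This step must be supplied, or imported from the scheme case together with the rest.

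Separately, your ``equivalent'' formulation is off by one. The inclusion \(\FYu^{1-\alpha}\hookrightarrow\FYu^{-\alpha}\) is not a quasi-isomorphism: by the truncated comparison, its cokernel is quasi-isomorphic to the line bundle \(\Omega^0_{\mathscr X,\bar w}(\alpha)\) placed in degree \(0\). The correct range is \(\lambda\leq-\alpha\).
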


\begin{theorem}[Projective good model]
\label{thm:projective-anchor}
If the coarse moduli space of \(\mathscr U\) is quasi-projective, then
\((\mathscr U,w)\) has a projective good stack compactification
\textup{(\cite[Proposition~5.4(ii)]{WangCompactification})}.
\end{theorem}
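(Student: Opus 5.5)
The statement to prove is \cref{thm:projective-anchor}, which the paper attributes to the companion \cite[Proposition~5.4(ii)]{WangCompactification}. I would follow the standard route for varieties, replacing each step by its stacky counterpart.

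\textbf{Step 1: a proper stack with projective coarse space.} Let \(\pi:\mathscr U\to U\) be the coarse space, with \(U\) quasi-projective. I first want a proper \(\mathscr X_0\) containing \(\mathscr U\) as a dense open substack and having projective coarse space. Kresch--Vistoli give a finite flat surjection from a quasi-projective scheme, so \(\mathscr U\) is a quotient stack \([V/\mathrm{GL}_n]\). I would then take an equivariant projective compactification of a suitable quasi-projective model. Alternatively, one can use the Kresch criterion: \(\mathscr U\) is a closed substack of a smooth quotient \([W/G]\) with quasi-projective coarse space. In that case one compactifies \(W\subset\bar W\) \(G\)-equivariantly and projectively, and takes the closure of \(\mathscr U\) in the stacky GIT quotient of the stable locus, following Kresch's construction of quasi-projective DM stacks. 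This step uses Kresch's theory of projective DM stacks as input and is not proved from scratch.

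\textbf{Step 2: extend \(w\) and resolve.} View \(w\) as a rational map \(\mathscr X_0\dashrightarrow\Pj^1\). Take the closure of its graph in \(\mathscr X_0\times\Pj^1\); it is proper, and its coarse space is projective because it is closed in a product of projective spaces. Then apply functorial (Bierstone--Milman / Temkin) resolution of singularities and embedded resolution of the boundary. Because these are functorial for smooth morphisms, they descend to DM stacks. The result is a smooth \(\mathscr X\) with a morphism \(\bar w:\mathscr X\to\Pj^1\) extending \(w\), and with \((\mathscr X\setminus\mathscr U)_{\red}\) a normal crossings divisor. Since the blow-ups are projective and their centres avoid \(\mathscr U\), the coarse space stays projective and \(\mathscr U\) stays untouched.

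\textbf{Step 3: from NC to labelled SNC.} Functorial embedded resolution can produce only normal crossings on a stack, because étale-locally a component may meet itself. To reach the labelled SNC condition of \cref{def:labelled-snc}, I would use Temkin's principalization, whose exceptional divisors come with a history ordering. Equivalently, I would apply the standard trick of blowing up the strata of self-intersection in order of increasing dimension, which separates the branches so that each component becomes smooth. These blow-ups are again functorial, so they descend to the stack.

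\textbf{Main obstacle.} The hardest part is Step 1: producing a proper compactification whose coarse space is projective while keeping a DM stack, rather than only an Artin stack, at the boundary. I would first try to embed \(\mathscr U\) into a smooth projective DM stack \(\mathscr Y\) via Kresch's theorem, which says that a DM stack with quasi-projective coarse space and the resolution property embeds into a stack of the form \([\text{quasi-projective}/\mathrm{GL}_n]\). I would then take the closure of \(\mathscr U\) in \(\mathscr Y\) and perform the remaining steps there.
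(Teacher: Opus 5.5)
The paper gives no proof of this statement. It is imported from the companion \Companion{} (Proposition~5.4(ii) there), so there is no internal argument to compare yours with. Your outline is the standard route and is sound in structure:
\begin{itemize}
\item a proper Deligne--Mumford stack \(\mathscr X_0\supseteq\mathscr U\) with projective coarse space;
\item the closure of the graph of \(w\) in \(\mathscr X_0\times\Pj^1\);
\item resolution and principalization functorial for smooth morphisms, which therefore descend to stacks, leave the smooth open \(\mathscr U\) untouched, and make the complement an NC divisor;
\item blow-ups of the multiplicity strata of the boundary, deepest first, to separate self-crossings and reach a labelled SNC boundary as in \cref{def:labelled-snc}.
\end{itemize}

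Two points need tightening.

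\textbf{Step~1.} The theorem you quote in your last paragraph does not give properness. A locally closed embedding into \([\text{quasi-projective}/\mathrm{GL}_n]\) amounts to nothing more than the quotient-stack property. The input you need is Kresch's theorem \cite{KreschGeometry}: in characteristic zero, a separated quotient Deligne--Mumford stack with quasi-projective coarse space is quasi-projective in his sense. That is, it admits a locally closed embedding into a smooth proper Deligne--Mumford stack with projective coarse space. The closure of \(\mathscr U\) there is your \(\mathscr X_0\), and its coarse space is finite over a projective one, hence projective.

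That \(\mathscr U\) is a quotient stack comes from smoothness plus quasi-projectivity of the coarse space \cite{KreschVistoli,KreschGeometry}. The finite flat cover is a consequence of this, not its source.

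Your two preliminary alternatives would not suffice by themselves. An equivariant compactification of a \(\mathrm{GL}_n\)-space gives a quotient that is in general neither Deligne--Mumford nor separated. A GIT quotient of the stable locus is proper only if stable equals semistable, which you would have to arrange.

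\textbf{Step~2.} The claim that the coarse space stays projective under the blow-ups deserves one line. Take a power of the relatively ample \(\cO(-E)\) on which all stabilizers act trivially. It descends to the coarse space and is relatively ample there, since ampleness can be checked fibrewise after a finite surjective pullback.
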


\begin{theorem}[Comparison of compactifications]
\label{thm:compactification-comparison}
Let \(\mathscr X_1\) and \(\mathscr X_2\) be two 
NC rational stack compactifications of the same \((\mathscr U,w)\).
For every \(\alpha\in\Q\cap[0,1)\), there are compatible
isomorphisms between the derived global sections of all levels and all
successive quotients of their fixed-\(\alpha\) Kontsevich
filtrations.  These isomorphisms identify the two spectral sequences
from \(E_1\).  They are compatible with the maps to
\(H_{\mathrm{dR}}^\bullet(\mathscr U,w)\).  The corresponding Yu
filtered diagrams and spectral sequences are also identified.  
\textup{(\cite[Proposition~3.5 and
Theorem~5.6]{WangCompactification})}
\end{theorem}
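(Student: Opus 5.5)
The plan is to reduce everything to a local statement about pushforward along a proper birational morphism, and then check that statement étale-locally on the target, where it becomes the known scheme case. By \cref{thm:kontsevich-yu} it suffices to work with Yu's filtration. For every \(p\) and \(\alpha\), the level \(\sigma_{\geq p}K(\alpha)\) is quasi-isomorphic to \(\FYu^{p-\alpha}\cK^\bullet\). Its successive quotients are then identified with the successive quotients of the Yu filtration between the indices \(p-\alpha\) and \(p+1-\alpha\). So an identification of the derived global sections of all Yu levels, compatible with the inclusions into \(\cK^\bullet\), gives both the Kontsevich and the Yu statements at once. Compatibility with \(H_{\mathrm{dR}}^\bullet(\mathscr U,w)\) comes from the untruncated comparison \eqref{eq:untruncated-comparison}, because \(R\Gamma(\mathscr X,\cK^\bullet_{\mathscr X,\bar w})\) computes twisted de Rham cohomology of \(\mathscr U\).

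\emph{Step 1: a common dominating compactification.} Given \(\mathscr X_1\) and \(\mathscr X_2\), I take the closure of the diagonal image of \(\mathscr U\) in \(\mathscr X_1\times\mathscr X_2\times\Pj^1\), using the graph of \(w\) in the last factor. I then apply a functorial embedded resolution, which commutes with smooth base change and therefore exists for Deligne--Mumford stacks. This produces a good stack compactification \(\mathscr X_3\) with proper birational maps \(f_i:\mathscr X_3\to\mathscr X_i\) that are isomorphisms over \(\mathscr U\). On \(\mathscr X_3\), the map \(\bar w\) is a morphism and the boundary is SNC, with the polar divisor supported on it. It therefore suffices to construct, for a single \(f=f_i\), filtered quasi-isomorphisms
\[
\FYu^\lambda\cK^\bullet_{\mathscr X_i}\xrightarrow{\sim}Rf_*\FYu^\lambda\cK^\bullet_{\mathscr X_3}
\qquad\text{for all }\lambda\in\Q,
\]
compatible with the inclusions for \(\lambda\le\lambda'\). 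Taking \(R\Gamma\) then identifies all levels and all quotients. Identifying the filtered complexes from \(E_0\) onward identifies the spectral sequences from \(E_1\).

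\emph{Step 2: the local comparison.} The pullback of forms gives a natural map \(\FYu^\lambda\cK_{\mathscr X_i}\to f_*\FYu^\lambda\cK_{\mathscr X_3}\). The pole orders are respected because \(f^*\mathscr P_i\) is the polar divisor on \(\mathscr X_3\) and rounding is monotone. Whether the induced map to \(Rf_*\) is a quasi-isomorphism is étale-local on \(\mathscr X_i\). Formation of \(Rf_*\) commutes with étale base change, and so do all the lattices involved. Taking an étale atlas \(V\to\mathscr X_i\) by a scheme, the claim becomes the same statement for the proper birational morphism of schemes \(V\times_{\mathscr X_i}\mathscr X_3\to V\).

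\emph{Main obstacle.} The hard part is this scheme-level statement when \(\mathscr X_i\) is only NC rational, so that \(\bar w\) has indeterminacy and \(\mathscr Z\) meets the boundary. For good targets it is the invariance of Yu's filtration under blow-ups proved by Yu and Esnault--Sabbah--Yu. For NC rational targets it is the Chen--Yu comparison. Both are local on the base, so they apply on \(V\). I would first check that the hypotheses transfer. Condition (5), that \(\mathscr Z+\mathscr D\) is reduced NC near \(|\mathscr P|\), is étale-local and so holds on \(V\). After that, the étale-local statement is exactly the input of the scheme case.
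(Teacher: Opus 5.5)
The paper does not prove this statement. It imports it from the companion paper, so your outline has to stand on its own. It has a genuine gap in Step~2.

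\textbf{Main gap: non-representability in Step~2.} You assert that, after base change to an \'etale chart \(V\to\mathscr X_i\), the question concerns a proper birational morphism of \emph{schemes} \(V\times_{\mathscr X_i}\mathscr X_3\to V\). That requires \(f_i\) to be representable, i.e.\ injective on stabilizers, and in general it is not. \(\mathscr X_3\) is representable over \(\mathscr X_1\times\mathscr X_2\times\Pj^1\), but the projection \(\mathscr X_1\times\mathscr X_2\to\mathscr X_1\) is representable only when \(\mathscr X_2\) is an algebraic space.

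No choice of roof avoids this. Let \(\mathscr U=U\) be a scheme, let \(\mathscr X_1=X\) be a good scheme compactification with boundary component \(D\), and let \(\mathscr X_2=\sqrt[e]{D/X}\) with \(e\geq2\).
\begin{itemize}
\item Suppose an integral algebraic space \(Z\), proper birational over \(X\), mapped to \(\mathscr X_2\) extending \(\id_U\). Then the pullback of \(D\) to \(Z\) would be \(e\) times an effective Cartier divisor.
\item But \(Z\to X\) is an isomorphism over the generic point of \(D\), where that pullback has multiplicity one. This is a contradiction.
\item So every roof has stabilizers over the boundary, and \(V\times_X\mathscr X_3\) is a genuine stack. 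The scheme results of Yu, Esnault--Sabbah--Yu and Chen--Yu say nothing about it.
\end{itemize}

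In this example the comparison does hold, but only through a stacky computation that your plan never makes. The coarse map \(\pi:\sqrt[e]{D/X}\to X\) has exact pushforward, \(\pi^*\Omega_X^a(\log D)=\Omega^a(\log D')\), and \(\pi_*\cO(kD')=\cO(\lfloor k/e\rfloor D)\). The identity \(\lfloor\lfloor eu\rfloor/e\rfloor=\lfloor u\rfloor\) from the proof of \cref{lem:semilocal-dvr-trace} then gives \(R\pi_*\FYu^\lambda=\FYu^\lambda\).

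In general you need an extra step. It must split a non-representable proper birational map into two kinds of pieces. One is a representable part, which reduces \'etale-locally to schemes. The others are coarse-space or root-stack parts, computed by Kummer arguments of this kind. Relative coarse spaces combined with destackification is one way to do this. Without such a step, your argument covers only the case where both \(f_i\) happen to be representable.

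\textbf{Step~1 is also wrong as written.} For a stack with nontrivial stabilizers, the diagonal \(\mathscr U\to\mathscr U\times\mathscr U\) is finite and unramified but not a monomorphism. So \(\mathscr U\) is not a substack of \(\mathscr X_1\times\mathscr X_2\times\Pj^1\), and the closure of its image does not compactify \(\mathscr U\). For \(\mathscr U=U\times BG\), the image has generic stabilizer \(G\times G\), so neither projection is an isomorphism over \(\mathscr U\). Even with trivial generic stabilizer, the image is typically non-normal where stabilizers jump, and a resolution need not be an isomorphism over \(\mathscr U\).

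This part is fixable. Take the normalization of \(\mathscr X_1\times\mathscr X_2\times\Pj^1\) in \(\mathscr U\), using Zariski's main theorem for the representable, quasi-finite, separated map from \(\mathscr U\), and then resolve.

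\textbf{Smaller issues.} \cref{thm:kontsevich-yu} is stated in this paper only for good compactifications. For NC rational \(\mathscr X_i\) you need the companion's versions of both the Kontsevich complex and that comparison. You also need Chen--Yu in the sheaf-level form \(\FYu_X\simeq Rf_*\FYu_{X'}\), not just as an equality of filtrations on global hypercohomology, before you can localize it to \(V\).
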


\subsection{The scheme theorem}

We also use the following scheme theorem.

\begin{theorem}[Esnault--Sabbah--Yu, {\cite[Theorem~1.2.2 and Corollary~1.4.8]{ESY}}]
\label{thm:esy}
Let \(Y\) be a smooth projective complex scheme.  Let \(D\) be an SNC
divisor, and let \(g:Y\to\Pj^1\) satisfy
\(g(Y\setminus D)\subseteq\A^1\).  Then, for every
\(\alpha\in\Q\cap[0,1)\), the spectral sequence
\[
 H^q\!\left(Y,\Omega_{Y,g}^p(\alpha)\right)
 \Longrightarrow
 \bH^{p+q}\!\left(Y,K_{Y,g}(\alpha)\right)
\]
degenerates at \(E_1\).
\end{theorem}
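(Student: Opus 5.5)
Since \cref{thm:esy} is an input theorem, I only sketch how I would prove it; the proof in \cite{ESY} may differ. The aim is a \emph{dimension count}. The abutment \(\bH^{k}(Y,K_{Y,g}(\alpha))\) is identified with \(H^{k}_{\mathrm{dR}}(Y\setminus D,g)\) by the scheme analogue of \eqref{eq:untruncated-comparison}. The spectral sequence is one of finite-dimensional spaces, and \(E_\infty\) is a subquotient of \(E_1\). So degeneration at \(E_1\) is equivalent to
\[
 \sum_{p+q=k}\dim H^q\!\bigl(Y,\Omega^p_{Y,g}(\alpha)\bigr)
 =\dim H^k_{\mathrm{dR}}(Y\setminus D,g)\quad\text{for all }k.
\]
The inequality \(\geq\) is automatic. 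The whole problem is to prove \(\leq\), i.e.\ that the \(E_1\) term is no larger than the twisted de Rham cohomology.

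The steps, in order, are as follows.

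First, reduce a rational \(\alpha=r/e\) to the integral case \(\alpha=0\). Take a cyclic (Kummer) cover \(\pi:Y'\to Y\) ramified to order \(e\) along the components of \(P=g^*(\infty)\), defined by \(t^{e}=\) a local equation of \(P\) up to the \(\mu_e\)-action. Then resolve to make the boundary SNC again. The \(\mu_e\)-isotypic decomposition of \(\pi_*\Omega^\bullet_{Y',g\circ\pi}(0)\) should be precisely the sum of the complexes \(K_{Y,g}(j/e)\), twisted appropriately. So \(E_1\)-degeneration on \(Y'\) for \(\alpha=0\) gives it for every \(\alpha\in\frac1e\Z\cap[0,1)\) by taking a direct summand.

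Second, prove the case \(\alpha=0\). Here \(\Omega^\bullet_{Y,g}(0)\) is the Kontsevich complex. I would use the Deligne--Illusie method. Spread \((Y,D,g)\) out over a finitely generated \(\Z\)-algebra, reduce modulo a large prime \(p\), and lift to \(W_2\). Over a perfect field, construct a Cartier-type quasi-isomorphism between the Frobenius twist of \(\bigoplus_i\Omega^i_{Y,g}[-i]\) and \(F_*\) of the Kontsevich complex with differential \(d+dg\wedge\). The key point is that \(\Omega^\bullet_{Y,g}\) is stable under both \(d\) and \(dg\wedge\), so one has the local Cartier isomorphism \(\mathcal H^i(F_*\Omega^\bullet_{Y,g},d+dg)\cong\Omega^{i\,(1)}_{Y,g}\), hence the comparison. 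Counting dimensions in characteristic \(p\), where \(\dim E_1\le\dim\bH\), gives degeneration there. Semicontinuity then returns it to characteristic \(0\). An alternative for this step uses Sabbah's theory: strictness of the direct image of the irregular (twistor) mixed Hodge module \(\mathcal E^g\), together with the identification of the Kontsevich filtration with the irregular Hodge filtration of Sabbah--Yu.

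The main obstacle is the Cartier isomorphism for the twisted complex in the second step, specifically its local verification near points of \(D\) where \(P\) has several components with multiplicities. In coordinates with \(g=x^{-m}\) times a unit, one must show that the cohomology sheaves of \((F_*\Omega^\bullet_{Y,g},d+dg)\) have the expected rank. I would first try a Koszul-type computation in the monomial coordinates: split off the \(dg\wedge\) part, then apply the logarithmic Cartier isomorphism to the remaining factor. I expect careful bookkeeping with \(\lfloor\cdot\rfloor\) on each component to be required. The Kummer reduction in the first step also requires checking that normalization and resolution do not destroy the lattice identifications. This is analogous to the trace computation for tame Kummer extensions of DVRs described in the introduction.
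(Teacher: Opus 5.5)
Your dimension-count reformulation is fine, but the main route breaks at the step you call the key point. In characteristic $p$ one has $\partial_i^p=0$ on functions. By Jacobson's formula, $\nabla_{\partial_i}=\partial_i+\partial_i g$ therefore satisfies $\nabla_{\partial_i}^p=(\partial_i g)^p$, so the connection $d+dg$ has nonzero $p$-curvature. Near any point of $Y\setminus D$ where $dg\neq0$, some $(\partial_i g)^p$ is a unit, so $\nabla_{\partial_i}$ is invertible. The de Rham complex is the Koszul complex of the commuting operators $\nabla_{\partial_1},\dots,\nabla_{\partial_n}$, so it is acyclic there. For instance, for $g=x$ on $\A^1_{\mathbb F_p}$ one has $(\partial+1)^p=1$, so $d+dx\colon\cO\to\Omega^1$ is bijective. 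Hence there is no isomorphism $\mathcal H^i(F_*\Omega^\bullet_{Y,g},d+dg)\cong\Omega^{i\,(1)}_{Y,g}$, and no Deligne--Illusie decomposition of $F_*K_{Y,g}$. The failure already occurs at non-critical interior points, not in the boundary bookkeeping you anticipated. A Frobenius argument needs vanishing $p$-curvature, i.e.\ the untwisted differential $d$ on $\Omega^\bullet_{Y,g}$. It would then need an independent proof that $\dim\bH^k(Y,(\Omega^\bullet_{Y,g},d))=\dim\bH^k(Y,(\Omega^\bullet_{Y,g},d+dg))$. Your plan supplies neither.

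Your reduction to $\alpha=0$ also fails. Suppose $\pi\colon Y'\to Y$ is given near a component of $P$ by $t^e=x$, with $g'=g\circ\pi$. Then $\pi$ is log \'etale there and $\Omega^a_{Y',g'}(0)=\pi^*\Omega^a_{Y,g}(0)$. The $\chi_j$-isotypic part of $\pi_*K_{Y',g'}(0)$ is therefore $t^j\,\Omega^\bullet_{Y,g}(0)$ with differential $d+dg+\tfrac je\,d\log x$. This is again the $\alpha=0$ lattice, now for $\mathcal E^g$ twisted by a nontrivial Kummer connection. The invariant part is just $K_{Y,g}(0)$, by the averaging argument of \cref{sec:trace}. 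None of these pieces is $K_{Y,g}(j/e)$ for $0<j<e$. Concretely, take $Y=\Pj^1$, $g=x^2$ and the cover $y^2=x$. The $E_1$ page of $K_{\Pj^1,x^2}(1/2)$ is $E_1^{1,0}=H^0(\cO)$. The $E_1$ page of $K_{\Pj^1,y^4}(0)$ is $E_1^{0,1}=H^1(\cO(-4))$. So the former is not a filtered summand of the latter. What a cover does give (\cref{prop:kontsevich-pull-trace}) is a retraction between $K_{Y,g}(\alpha)$ and $\pi_*K_{Y',g'}(\alpha)$ for the \emph{same} $\alpha$. Making $\alpha P'$ integral does not reduce to $\alpha=0$: in the example, $\tfrac12P$ is already integral, yet $K(1/2)$ and $K(0)$ have different $E_1$ pages. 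The paper itself gives no proof of \cref{thm:esy}; it imports the result from \cite{ESY}. There, degeneration for all $\alpha$ at once comes from Sabbah's twistor-$\mathcal D$-module approach to the irregular Hodge filtration, combined with the identification of the Kontsevich complexes with Yu's filtration. That is the ``alternative'' you mention in one sentence, and it carries the whole difficulty.
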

 \section{Finite covers compatible with the boundary}
\label{sec:covers}

\begin{lemma}[Relative Bertini with avoidance]
\label{lem:relative-bertini}
Let
\[
 f:U\longrightarrow V
\]
be a proper surjective morphism of quasi-projective schemes over
\(\C\).  Assume that every geometric fibre of \(f\) is nonempty and
pure of dimension \(r\), where \(r\geq 1\).

Let \(B\subseteq U\) be a closed subscheme such that
\[
 \dim B<r.
\]
Let
\[
 U_1,\ldots,U_s\subseteq U
\]
be smooth locally closed subschemes.

Fix a locally closed immersion
\[
 U\hookrightarrow \Pj^N.
\]
For every sufficiently large integer \(d\), there is a dense open
subset
\[
 \mathcal H_d^\circ
 \subseteq
 \mathcal H_d
 :=
 \left|\cO_{\Pj^N}(d)\right|
\]
such that every hypersurface \(H\in\mathcal H_d^\circ\) has the
following properties.
\begin{enumerate}[label=\textnormal{(\arabic*)}]
\item
Every geometric fibre of
\[
 U\cap H\longrightarrow V
\]
is nonempty and pure of dimension \(r-1\).

\item
We have
\[
 \dim(B\cap H)<r-1.
\]
If \(\dim B=0\), our choice of \(H\) makes this intersection empty.

\item
For every \(i\), the intersection \(U_i\cap H\) is empty or is a
smooth effective Cartier divisor on \(U_i\).  In particular, it has
pure codimension one in \(U_i\).
\end{enumerate}
Here all intersections are scheme-theoretic, and we use the
convention \(\dim\varnothing=-\infty\).
\end{lemma}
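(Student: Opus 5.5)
Each of the three conditions will be shown to hold on a dense open subset of \(\mathcal H_d\) for \(d\gg0\). Since \(\mathcal H_d\) is irreducible, the intersection of these finitely many dense opens is again dense open, and we take it as \(\mathcal H_d^\circ\). Condition (3) is the classical Bertini theorem in characteristic zero, applied to each \(U_i\) separately. The morphism \(U_i\to\Pj^N\) is a locally closed immersion and \(|\cO(d)|\) is basepoint free. So for general \(H\) the scheme \(U_i\cap H\) is smooth, and it is either empty or of pure codimension one in \(U_i\). It is Cartier because it is cut out locally by one equation that vanishes identically on no component of \(U_i\). The last point also uses genericity: \(H\) should contain no irreducible component of any \(U_i\), which is an open condition.

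For (2), fix finitely many closed points, one in each irreducible component of \(B\). A general \(H\) misses these points, so it contains no component of \(B\), and hence \(\dim(B\cap H)\le\dim B-1<r-1\). If \(\dim B=0\), then \(B\) is a finite set of points. Avoiding each point is a nonempty open condition on \(H\) because \(\cO(d)\) is basepoint free, so a general \(H\) misses \(B\) entirely.

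Condition (1) is the substantive step and the main obstacle. Whenever \(H\cap f^{-1}(v)\neq\varnothing\), the fibre \((U\cap H)_v=U_v\cap H\) has every component of dimension at least \(r-1\), by Krull's principal ideal theorem. So two things must be shown.

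First, non-emptiness. Each \(U_v\) is proper of dimension \(r\geq1\), and \(U_v\hookrightarrow\Pj^N\) is a locally closed immersion from a proper scheme, hence a closed immersion. A closed positive-dimensional subvariety of \(\Pj^N\) meets every hypersurface. Therefore \(U_v\cap H\neq\varnothing\) for all \(H\); this is where properness and \(r\geq1\) are used.

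Second, \(H\) must contain no irreducible component of any fibre \(U_v\). This is the upper bound \(\dim\le r-1\). I will prove it with an incidence-variety dimension count. Consider the relative Hilbert scheme or Chow variety, or more concretely the constructible family \(\mathcal C\to V\) parametrizing the irreducible components of the fibres of \(f\). It has dimension at most \(\dim V\le\dim U\). Form
\[
 \Sigma=\{(C,H): C\subseteq H\}\subseteq\mathcal C\times\mathcal H_d.
\]
For a fixed \(r\)-dimensional \(C\), the restriction \(H^0(\Pj^N,\cO(d))\to H^0(\bar C,\cO(d))\) has image of dimension at least \(\binom{d+r}{r}\) when \(d\gg0\), by comparison with a Noether normalization. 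So the codimension of \(\{H\supseteq C\}\) in \(\mathcal H_d\) is at least \(\binom{d+r}{r}\). This exceeds \(\dim\mathcal C\) once \(d\) is large. Hence \(\Sigma\) does not dominate \(\mathcal H_d\). Its image is constructible and not dense, so its complement contains a dense open set.

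To make this rigorous with constructible families, I will stratify \(V\) into finitely many locally closed pieces over which the fibre components form a flat family. This uses generic flatness and finite étale splitting of components. Then I run the dimension count on each stratum, using the uniform lower bound \(\binom{d+r}{r}\) for the codimension.

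Finally, the three open conditions are taken with \(d\) larger than the maximum of the thresholds needed in each step.
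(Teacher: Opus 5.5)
Your proposal is correct and follows the paper's proof. You use the same three-way split into dense open conditions: Bertini on each \(U_i\); a general \(H\) containing no component of \(B\), where your single avoided point per component is equivalent to the paper's removal of linear subspaces; and, for (1), nonemptiness from properness with \(r\geq1\) plus the requirement that \(H\) contain no irreducible component of any fibre. The one difference is that the paper takes this last requirement directly from Kresch--Vistoli's Lemma~3.1, whereas you prove it by an incidence dimension count. That count, using the bound \(\binom{d+r}{r}\) on the codimension of \(\{H\supseteq C\}\) (valid for every \(d\) by graded Noether normalization), is a sound self-contained substitute for the citation.
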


\begin{proof}
We impose the three conditions separately.

First consider the fibres of \(f\).  For all sufficiently large
\(d\), there is a dense open subset
\[
 \mathcal H_{d,\mathrm{fib}}\subseteq\mathcal H_d
\]
such that no \(H\in\mathcal H_{d,\mathrm{fib}}\) contains an
irreducible component of any geometric fibre of \(f\).  Let
\(E\) be such a component.  It is projective and has positive
dimension.  Hence a nonzero section of the ample line bundle
\(\cO_E(d)\) must have a zero.  It follows that \(E\cap H\) is
nonempty and has pure codimension one in \(E\).  Therefore, for any
\(H\in\mathcal H_{d,\mathrm{fib}}\),
\[
 (U\cap H)_{\bar v}
 =
 U_{\bar v}\cap H
\]
is nonempty and pure of dimension \(r-1\)
for every geometric point \(\bar v\to V\).
This is the content of \cite[Lemma~3.1]{KreschVistoli}.

Next consider \(B\).  Let \(B_1,\ldots,B_m\) be its irreducible
components.  For each \(B_j\), the hypersurfaces containing \(B_j\)
form a proper closed linear subspace of \(\mathcal H_d\).  We remove
these finitely many subspaces.  This gives a dense open subset
\[
 \mathcal H_{d,B}\subseteq\mathcal H_d.
\]
For every \(H\in\mathcal H_{d,B}\), no irreducible component of
\(B\) is contained in \(H\).  Therefore
\[
 \dim(B\cap H)\leq\dim B-1<r-1.
\]
This proves~(2).  

Finally, fix \(i\).  The degree-\(d\) hypersurfaces restrict to a
base-point-free linear system on \(U_i\).  By Bertini's theorem
\cite[Chapter~III, Corollary~10.9]{Hartshorne}, there is a dense open
subset
\[
 \mathcal H_{d,i}\subseteq\mathcal H_d
\]
such that \(H\cap U_i\) is smooth.  We may also require that \(H\)
contain no irreducible component of \(U_i\).  Thus \(H\cap U_i\) is
empty or is a smooth effective Cartier divisor on \(U_i\).

Now take
\[
 \mathcal H_d^\circ
 :=
 \mathcal H_{d,\mathrm{fib}}
 \cap
 \mathcal H_{d,B}
 \cap
 \bigcap_{i=1}^s\mathcal H_{d,i}.
\]
This is a finite intersection of dense open subsets of the
irreducible space \(\mathcal H_d\).  It is therefore dense and open.
\end{proof}

\begin{proposition}[Stratified Kresch--Vistoli cover]
\label{prop:stratified-kv}
Let \(\mathscr X\) be a smooth separated quotient
Deligne--Mumford stack over \(\C\).  Assume that its coarse moduli
space is quasi-projective.  Let
\(\{\mathscr Z_\nu\}_{\nu\in N}\) be a finite family of smooth closed
substacks.  Then there are a smooth quasi-projective scheme \(Y\) and
a finite flat surjective lci map
\[
 q:Y\longrightarrow\mathscr X
\]
such that, for every \(\nu\in N\), the scheme
\[
 Y\times_{\mathscr X}\mathscr Z_\nu
\]
is smooth over \(\C\).
\end{proposition}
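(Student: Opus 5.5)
We follow the Kresch--Vistoli construction of a finite flat cover of a quotient stack, refined by \cref{lem:relative-bertini}. Since \(\mathscr X\) is a quotient stack with quasi-projective coarse space \(X\), we write \(\mathscr X=[W/G]\) with \(G=\mathrm{GL}_n\) acting on a quasi-projective scheme \(W\). We choose this so that \(W\to\mathscr X\) is smooth and \(W\) is itself smooth, for instance a frame bundle of a vector bundle with generically free action. We then pass to the quotient \(U:=W/B_G\) by a Borel subgroup, or to a flag-bundle variant, so that \(U\) is quasi-projective and \(f:U\to\mathscr X\) is smooth, proper, and representable with geometric fibres \(G/B_G\) of pure dimension \(r\). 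By construction \(U\) is smooth, and so is each \(U_\nu:=U\times_{\mathscr X}\mathscr Z_\nu\), because \(f\) is smooth. To apply \cref{lem:relative-bertini} to schemes, we work with \(U\to X\), or rather with the fibres of \(U\) over \(\mathscr X\) viewed as fibres over geometric points; these are the same fibres since \(f\) is representable. For the embedding we fix a locally closed immersion \(U\hookrightarrow\Pj^N\) using an ample line bundle on \(U\), which exists by Kresch--Vistoli.

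Next we cut down \(r\) times. At each step we choose \(H\in\mathcal H_d^\circ\) from \cref{lem:relative-bertini}. As smooth locally closed subschemes we take the current total space together with all its intersections with the \(U_\nu\). The lemma then keeps every geometric fibre over \(\mathscr X\) nonempty and pure of one smaller dimension, and keeps each stratum smooth of the expected codimension. After \(r\) cuts we obtain a closed subscheme \(Y\subseteq U\) that is a complete intersection of \(r\) hypersurfaces. We need smoothness of \(Y\), and that is arranged by also including \(Y\) (the full intermediate intersection) among the \(U_i\). The map \(q:Y\to\mathscr X\) is proper with finite nonempty fibres, hence finite and surjective. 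We also need \(Y\times_{\mathscr X}\mathscr Z_\nu=Y\cap U_\nu\) to be smooth, and this is again part of the Bertini output. The set \(B\) is not needed here except to avoid bad loci, and we may take it empty.

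Flatness and the lci property come from miracle flatness, checked after smooth base change to an atlas. The source \(Y\) is smooth, hence Cohen--Macaulay, the target is smooth, and the fibres are zero-dimensional; flatness follows. For lci, \(Y\) is cut out in \(U\) by \(r\) equations, so it is a regular embedding into a space smooth over \(\mathscr X\). Alternatively, any morphism between smooth spaces is lci. We also need \(Y\) to be quasi-projective, which holds because it is closed in the quasi-projective scheme \(U\).

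The main obstacle is the setup: producing a smooth quasi-projective \(U\) with a smooth, proper, representable map to \(\mathscr X\) with equidimensional fibres, and a projective embedding compatible with the fibres. The construction in Kresch--Vistoli handles this, using the frame bundle of a vector bundle on \(\mathscr X\) with faithful stabilizer actions and a quotient by a Borel subgroup to get proper fibres. I would cite it directly. I would then check that the hyperplane sections interact correctly with the strata at each inductive step, including the degenerate case where some \(Y\cap U_\nu\) becomes empty, which is allowed.
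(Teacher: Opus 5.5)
\textbf{Gap: your setup step fails.} You assert that the Borel quotient \(U=W/B_G\) (or a flag-bundle variant) is a quasi-projective scheme with \(U\to\mathscr X\) smooth, proper and representable, and you then take \(B=\varnothing\). This is false as soon as \(\mathscr X\) has a nontrivial stabilizer. Suppose \(1\neq h\in G_w\subseteq\mathrm{GL}_n\) for some \(w\in W\). Then \(h\) has finite order, so it is semisimple and lies in some Borel subgroup \(gB_Gg^{-1}\). The point of \([W/B_G]\) given by \(g^{-1}w\) then has stabilizer \(g^{-1}G_wg\cap B_G\), which contains \(g^{-1}hg\neq 1\). If a stabilizer contains a nontrivial scalar, the whole fibre is stacky. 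The same obstruction holds for any \(G/P\)-bundle or projectivized vector bundle: a finite-order element acting linearly on a flag variety or a projective space always has a fixed point. So your construction does not give a scheme \(U\), and Kresch--Vistoli do not supply one.

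\textbf{What Kresch--Vistoli actually give, and how the paper uses it.} They provide a smooth projective \(\pi:\mathscr P\to\mathscr X\) of relative dimension \(r>0\) whose total space is still a stack. It comes with an open substack \(Q\subseteq\mathscr P\) that is a scheme, smooth and surjective over \(\mathscr X\), such that \(\dim(\mathscr P\setminus Q)<r\). The paper runs the Bertini cutting on the coarse map \(f:U\to V\) of \(\mathscr P\to\mathscr X\). It takes \(B\) to be the image of \(\mathscr P\setminus Q\), and uses \(Q\) and \(Q_\nu=Q\times_{\mathscr X}\mathscr Z_\nu\) as the smooth locally closed subschemes. Condition (2) of \cref{lem:relative-bertini} forces \(B^{(r)}=\varnothing\), so \(Y=U^{(r)}\subseteq Q\). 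This inclusion does three things:
\begin{enumerate}
\item it makes \(Y\) a scheme over \(\mathscr X\);
\item it makes \(Y\) closed in \(\mathscr P\), since \(Q\cong U\setminus B\) means the preimage of \(Y\) under \(\mathscr P\to U\) is \(Y\) itself;
\item hence \(q\) is proper, and being quasi-finite and representable, it is finite.
\end{enumerate}
Your remark that \(B\) may be taken empty discards exactly the ingredient that makes the argument work.

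\textbf{A smaller point on flatness.} Your criterion ``source Cohen--Macaulay, target regular, fibres zero-dimensional'' is not enough; a closed point in a smooth curve satisfies all three. Miracle flatness needs \(\dim\cO_{Y_T,y}=\dim\cO_{T,t}\). The paper gets this from two facts: \(Y_T\hookrightarrow Q_T\) is a regular immersion of codimension \(r\), and \(Q_T\to T\) is smooth of relative dimension \(r\).

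With these repairs, the rest of your outline agrees with the paper. That includes adding the intermediate intersections to the smooth family, the finiteness and surjectivity argument, and lci via the regular immersion into a space smooth over \(\mathscr X\).
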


\begin{proof}
The proof of \cite[Theorem~2.1]{KreschVistoli} gives a smooth
projective morphism
\[
 \pi:\mathscr P\longrightarrow\mathscr X
\]
of relative dimension \(r>0\).  It also gives an open substack
\(Q\subseteq\mathscr P\) with the following properties.  The stack
\(Q\) is a scheme.  The map \(Q\to\mathscr X\) is smooth and
surjective.  If
\[
 \mathscr S:=\mathscr P\setminus Q,
\]
with its reduced structure, then
\[
 \dim\mathscr S<r.
\]

Let \(U\) and \(V\) be the coarse spaces of \(\mathscr P\) and
\(\mathscr X\).  Kresch and Vistoli show that \(U\) is a
quasi-projective scheme.  The induced map
\[
 f:U\longrightarrow V
\]
is proper and surjective.  Its geometric fibres are nonempty and
pure of dimension \(r\).  Indeed, these fibres are finite quotients
of the geometric fibres of \(\mathscr P\to\mathscr X\).  Let
\(B\subseteq U\) be the image of \(\mathscr S\).  The coarse map
identifies \(Q\) with \(U\setminus B\), and
\[
 \dim B<r.
\]

For every \(\nu\), put
\[
 Q_\nu:=Q\times_{\mathscr X}\mathscr Z_\nu.
\]
This is a closed subscheme of \(Q\).  It is smooth over \(\C\).
Indeed, \(Q_\nu\to\mathscr Z_\nu\) is the base change of the smooth
map \(Q\to\mathscr X\), and \(\mathscr Z_\nu\) is smooth over
\(\C\).  Thus \(Q\) and all the schemes \(Q_\nu\) are smooth
locally closed subschemes of \(U\).

Fix a locally closed immersion \(U\hookrightarrow\Pj^N\).  We now
apply \cref{lem:relative-bertini} repeatedly.  Set
\[
 U^{(0)}=U,\qquad B^{(0)}=B,\qquad
 Q^{(0)}=Q,\qquad Q_\nu^{(0)}=Q_\nu.
\]
Suppose that \(H_1,\ldots,H_{j-1}\) have been chosen.  Apply the
lemma to
\[
 U^{(j-1)}\longrightarrow V,
 \qquad B^{(j-1)}\subseteq U^{(j-1)},
\]
and to the finite family
\[
 Q^{(j-1)},\qquad \{Q_\nu^{(j-1)}\}_{\nu\in N}.
\]
Choose a sufficiently large degree \(d_j\), and choose a
hypersurface \(H_j\) in the dense open set given by the lemma.  Put
\begin{align*}
 U^{(j)}
 &:=U^{(j-1)}\cap H_j,\\
 B^{(j)}
 &:=B^{(j-1)}\cap H_j,\\
 Q^{(j)}
 &:=Q^{(j-1)}\cap H_j,\\
 Q_\nu^{(j)}
 &:=Q_\nu^{(j-1)}\cap H_j.
\end{align*}
The lemma gives the following facts:
\begin{enumerate}[label=\textnormal{(\arabic*)}]
\item every geometric fibre of \(U^{(j)}\to V\) is nonempty and
pure of dimension \(r-j\);
\item \(\dim B^{(j)}<r-j\);
\item \(Q^{(j)}\) and every \(Q_\nu^{(j)}\) are empty or smooth.
Moreover, each nonempty intersection in~(3) is an effective Cartier
divisor in the preceding one.
\end{enumerate}
These facts also verify the hypotheses needed for the next step.
Thus the construction continues until \(j=r\).

Now \(\dim B^{(r)}<0\), so \(B^{(r)}=\varnothing\).  Hence
\[
 Y:=U^{(r)}=Q^{(r)}\subseteq Q.
\]
The scheme \(Y\) is smooth and quasi-projective.  The map
\(Y\to V\) is proper and has nonempty zero-dimensional geometric
fibres.  It is finite by \cite[Tag~02LS]{StacksProject}.  It is
surjective because every geometric fibre is nonempty.

Let
\[
 q:Y\longrightarrow\mathscr X
\]
be the map induced by \(Y\subseteq Q\).  The construction of Kresch
and Vistoli identifies \(Q\) with \(U\setminus B\).  Hence the
inverse image of the closed subscheme \(Y\subseteq U\) under
\(\mathscr P\to U\) is again \(Y\).  Thus \(Y\) is closed in
\(\mathscr P\), so \(q\) is proper.  The map \(q\) is representable
because \(\mathscr X\) is Deligne--Mumford.  Its geometric fibres are
zero-dimensional, since they map into the fibres of \(Y\to V\).
Hence \(q\) is quasi-finite.  Since it is proper, it is finite by
\cite[Tag~02LS]{StacksProject}.  The coarse moduli
map identifies the geometric points of \(\mathscr X\) with those of
\(V\).  Since \(Y\to V\) is surjective, \(q\) is also surjective.

At each step, \(Q^{(j)}\) is an effective Cartier divisor in
\(Q^{(j-1)}\).  Therefore \(Y\to Q\) is a regular immersion.  Since
\(Q\to\mathscr X\) is smooth, the map \(q\) is lci.

It remains to prove flatness.  Take a smooth scheme chart
\(T\to\mathscr X\).  The scheme \(T\) is regular.  Moreover,
\(Y_T:=Y\times_{\mathscr X}T\) is smooth over \(Y\), and hence it is
smooth over \(\C\).  In particular, \(Y_T\) is Cohen--Macaulay.  The
map \(Y_T\to T\) is finite.  Put
\(Q_T:=Q\times_{\mathscr X}T\).  The map \(Q_T\to T\) is smooth of
relative dimension \(r\), while \(Y_T\to Q_T\) is a regular
immersion of codimension \(r\).  Let \(y\in Y_T\), and let \(t\) be
its image in \(T\).  Since \(Y_T\to T\) is finite, \(y\) is a closed
point of the fibre \((Q_T)_t\).  Hence
\[
 \dim\mathcal O_{Q_T,y}=\dim\mathcal O_{T,t}+r,
 \qquad
 \dim\mathcal O_{Y_T,y}
 =\dim\mathcal O_{Q_T,y}-r
 =\dim\mathcal O_{T,t}.
\]
Miracle flatness
\cite[Tag~00R4]{StacksProject} shows that \(Y_T\to T\) is flat.
Flatness is smooth-local on the target
\cite[Tag~06F7]{StacksProject}.  Thus \(q\) is flat.

Finally, scheme-theoretic base change gives
\[
 Y\times_{\mathscr X}\mathscr Z_\nu=Q_\nu^{(r)}.
\]
The right-hand side is smooth.  This is the required property.
\end{proof}

\begin{theorem}[Projective cover with SNC boundary]
\label{thm:projective-adapted-cover}
Let
\((\mathscr X,\boldsymbol{\mathscr D},\bar w)\) be a projective good
stack compactification.  There is a finite flat surjective map
\[
 q:Y\longrightarrow\mathscr X
\]
from a smooth projective scheme such that
\((q^{-1}\mathscr D)_{\red}\) is SNC.

Put
\[
 g:=\bar w\circ q,
 \qquad
 P_Y:=g^*(\infty)=q^*\mathscr P.
\]
Then \((Y,(q^{-1}\mathscr D)_{\red},g)\) satisfies the hypotheses of
\cref{thm:esy}.
\end{theorem}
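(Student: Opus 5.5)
We apply \cref{prop:stratified-kv} to \(\mathscr X\), with the strata \(\mathscr Z_\nu\) taken to be the smooth closed substacks \(\mathscr D_J\) for all nonempty \(J\subseteq I\) with \(\mathscr D_J\neq\varnothing\). We first check the hypotheses. A smooth separated Deligne--Mumford stack of finite type over \(\C\) whose coarse space is projective, hence quasi-projective, is a quotient stack; this follows from the results of Kresch--Vistoli on stacks with quasi-projective coarse space, and is the setting of \cite[Theorem~2.1]{KreschVistoli}. The proposition then gives a smooth quasi-projective scheme \(Y\) and a finite flat surjective lci map \(q:Y\to\mathscr X\) such that each \(Y_J:=Y\times_{\mathscr X}\mathscr D_J\) is smooth. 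Projectivity of \(Y\) follows because \(Y\to V\) is finite and \(V\), the coarse space of \(\mathscr X\), is projective. Hence \(Y\) is proper, and a finite morphism to a projective scheme has projective source.

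Next we show that \(D_Y:=(q^{-1}\mathscr D)_{\red}\) is SNC. Put \(E_i:=q^{-1}\mathscr D_i=Y\times_{\mathscr X}\mathscr D_i\). Since \(q\) is flat and \(\mathscr D_i\) is an effective Cartier divisor, \(E_i\) is an effective Cartier divisor on \(Y\). By construction \(E_i=Y_{\{i\}}\) is smooth, and in particular reduced. For every \(J\), the scheme-theoretic intersection \(\bigcap_{j\in J}E_j\) equals \(Y_J\), which is smooth. Flatness and finiteness of \(q\) preserve codimension: the fibres are zero-dimensional and going-down holds. Hence \(Y_J\) has pure codimension \(|J|\) whenever \(\mathscr D_J\) does. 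So \(\{E_i\}\) is a family of smooth Cartier divisors whose intersections are smooth of the expected codimension, and \(D_Y=\sum_i E_i\) is reduced, so it is SNC.

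A subtlety is that \(E_i\) may be disconnected, so \(D_Y\) is SNC with possibly several components coming from each \(E_i\). This is harmless: each connected component of \(E_i\) is a smooth prime divisor, and distinct components of the same \(E_i\) are disjoint. The set-theoretic equality \(q^{-1}(\mathscr D)=\bigcup_i E_i\) is clear, and it shows \(D_Y=\sum E_i\).

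Finally we verify the hypotheses of \cref{thm:esy}. Set \(g=\bar w\circ q:Y\to\Pj^1\). If \(y\notin D_Y\), then \(q(y)\in\mathscr U\), so \(g(y)=w(q(y))\in\A^1\). Pullback of Cartier divisors along \(q\) gives \(g^*(\infty)=q^*\bar w^*(\infty)=q^*\mathscr P=\sum m_iE_i\), which is supported on \(D_Y\). The step I expect to need the most care is the first one: confirming that a projective good \(\mathscr X\) is a quotient stack so that \cref{prop:stratified-kv} applies, and that the Kresch--Vistoli construction is available in that generality. The remaining steps are formal consequences of flatness and of the smoothness of the base changes \(Y_J\).
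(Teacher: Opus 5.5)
Your proposal is correct and follows the paper's proof essentially step for step. You get the quotient-stack property from Kresch--Vistoli-type results, apply \cref{prop:stratified-kv} to the strata \(\mathscr D_J\), pull back the \(\mathscr D_i\) flatly to smooth Cartier divisors whose intersections \(Y\times_{\mathscr X}\mathscr D_J\) are smooth of codimension \(|J|\), and then deduce projectivity; the only cosmetic differences are that you obtain projectivity from finiteness over the projective coarse space rather than from properness plus quasi-projectivity, and you check \(g(Y\setminus D_Y)\subseteq\A^1\) here, whereas the paper does that in the proof of \cref{thm:intro-projective}.
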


\begin{proof}
We may work one connected component at a time.  The stack
\(\mathscr X\) is a quotient stack by
\cite[Theorem~4.4]{KreschGeometry}.  Its coarse space is projective.

Apply \cref{prop:stratified-kv} to all nonempty strata
\(\mathscr D_J\).  We also include
\(\mathscr D_\varnothing=\mathscr X\).  We obtain a smooth
quasi-projective scheme \(Y\) and a finite flat surjective lci map
\(q:Y\to\mathscr X\) such that
\[
 Y\times_{\mathscr X}\mathscr D_J
\]
is smooth for every stratum in this family.  We now check the
boundary.  Put
\[
 D_i:=Y\times_{\mathscr X}\mathscr D_i.
\]
Flat pullback preserves effective Cartier divisors.  Thus every
nonempty \(D_i\) is an effective Cartier divisor on \(Y\).  It is
smooth by the property above.  Hence it is reduced.

For every \(J\subseteq I\), scheme-theoretic base change gives
\[
 \bigcap_{j\in J}D_j
 =Y\times_{\mathscr X}\mathscr D_J.
\]
This scheme is smooth by the property above.  Moreover, flat base
change preserves the codimension of the regular immersion
\(\mathscr D_J\hookrightarrow\mathscr X\).  Thus every nonempty
intersection has pure codimension \(|J|\) in \(Y\).  Therefore
\(\{D_i\}_{i\in I}\) is a labelled SNC divisor.  The support of
\(q^{-1}\mathscr D\) is the union of the supports of the \(D_i\).
Since every nonempty \(D_i\) is reduced, the reduced divisor with
this support is
\[
 \bigcup_{i\in I}D_i=(q^{-1}\mathscr D)_{\red}.
\]
This proves that \((q^{-1}\mathscr D)_{\red}\) is SNC.

The map \(q\) is finite, hence proper.  The stack \(\mathscr X\) is
proper.  Hence \(Y\) is proper.  A proper quasi-projective scheme over
\(\C\) is projective.
\end{proof}
 \section{Pullback and trace on the lattices}
\label{sec:trace}

Let \(\mathscr X\) be smooth and connected, 
\(\boldsymbol{\mathscr D}\) be a labelled SNC divisor on
\(\mathscr X\),
\[
 \mathscr P=\sum_i m_i\mathscr D_i
\]
be effective and supported on \(\mathscr D\).  

Let
\[
 q:Y\longrightarrow\mathscr X
\]
be finite flat and surjective.  
Assume that \(Y\) is smooth.  Let
\(\deg(q)\) denote the rank of \(q_*\cO_Y\).
Assume that
\[
 D_Y:=(q^{-1}\mathscr D)_{\red}
\]
is SNC.  
Put \(P_Y=q^*\mathscr P\).

The aim of this section is to construct a filtered retraction of
pullback on the Kontsevich complexes.  We first work with the rounded
logarithmic lattices.  Pullback preserves these lattices by
\cref{prop:weighted-pullback}.  We then define rational trace.  It
satisfies
\[
 \Tr_q^a\circ q^*=\deg(q)\,\id.
\]
Thus it remains to prove that rational trace also preserves the
rounded logarithmic lattices.  This is the content of
\cref{thm:weighted-trace}.  After that theorem,
\(\deg(q)^{-1}\Tr_q\) gives
the required filtered retraction.  The passage from the lattices to
the Kontsevich complexes is formal.  It uses only the projection
formula and the compatibility of trace with the de Rham differential.

\subsection{Pullback on the lattices}

\begin{proposition}[Weighted logarithmic pullback]
\label{prop:weighted-pullback}
For every \(a\) and every \(c\in\Q\), pullback restricts to an
\(\cO_{\mathscr X}\)-linear map
\begin{equation}
 q^*:
 \mathcal L_{\mathscr X}^a(c)
 \longrightarrow
 q_*\mathcal L_Y^a(c).
 \label{eq:weighted-pullback}
\end{equation}
\end{proposition}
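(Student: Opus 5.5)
The plan is to reduce to a local computation on étale charts, and then check that pullback respects poles. Both sheaves are locally free. Pullback of forms \(q^*:\Omega^a_{\mathscr X}(*\mathscr D)\to q_*\Omega^a_Y(*D_Y)\) is already defined on meromorphic forms, since \(q^{-1}\mathscr D\subseteq D_Y\) set-theoretically. So it suffices to show that it carries the subsheaf \(\mathcal L^a_{\mathscr X}(c)\) into \(q_*\mathcal L^a_Y(c)\). This is a local statement, and \(\mathcal L\) commutes with étale base change. We may therefore replace \(\mathscr X\) by an étale scheme chart \(T\) and \(Y\) by \(Y_T\), which is smooth and finite flat over \(T\), with \((Y_T\times_T D)_{\red}\) SNC.

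Since \(Y_T\) is smooth, hence normal, and \(\mathcal L^a_Y(c)\) is locally free, a meromorphic form lies in \(\mathcal L^a_Y(c)\) as soon as it does so at every codimension-one point of \(Y_T\). It therefore suffices to check at generic points \(\eta\) of components \(E\) of \(D_Y\); away from \(D_Y\) the forms are regular. Let \(\eta\) map to a point \(\xi\) of \(T\). Since \(q\) is finite flat and hence open and dimension preserving, \(\xi\) is a codimension-one point, the generic point of exactly one component \(\mathscr D_i\), or of none, in which case there is nothing to check because the form is regular there. Let \(x\) be a local equation of \(\mathscr D_i\) at \(\xi\), let \(y\) be a uniformizer of \(\mathcal O_{Y,\eta}\), and write \(x=uy^e\) with \(u\) a unit and \(e\geq1\) the ramification index. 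Then \(P_Y=q^*\mathscr P\) has multiplicity \(em_i\) along \(E\), where \(m_i\) is the multiplicity of \(\mathscr P\) along \(\mathscr D_i\).

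Near \(\xi\), a local section of \(\mathcal L^a_{\mathscr X}(c)\) is \(x^{-\lfloor cm_i\rfloor}\omega\) with \(\omega\) a log form along \(\mathscr D_i\). Its pullback \(q^*\omega\) is log along \(E\) at \(\eta\), because \(dx/x=e\,dy/y+du/u\) and regular forms pull back to regular forms. Hence
\[
 q^*\bigl(x^{-\lfloor cm_i\rfloor}\omega\bigr)
 =u^{-\lfloor cm_i\rfloor}y^{-e\lfloor cm_i\rfloor}q^*\omega,
\]
which has pole order at most \(e\lfloor cm_i\rfloor\) times a log form. The key inequality \(e\lfloor cm_i\rfloor\leq\lfloor cem_i\rfloor\) holds because \(e\lfloor cm_i\rfloor\) is an integer bounded by \(cem_i\). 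It gives membership in \(\Omega^a_Y(\log D_Y)(\lfloor cP_Y\rfloor)\) at \(\eta\). This works for negative \(c\) as well, since the floor inequality holds for all rationals. Together with regularity off \(D_Y\) and normality, this gives the claim. \(\mathcal O_{\mathscr X}\)-linearity is clear.

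The main obstacle is the passage from codimension one to everywhere. One has to justify that a form on \(Y_T\setminus D_Y\), meromorphic along \(D_Y\) and lying in the locally free lattice \(\mathcal L^a_Y(c)\) at all height-one points, lies in it globally. This follows because \(Y\) is normal, so a locally free sheaf is reflexive and sections extend over codimension two (Hartogs). The SNC hypothesis on \(D_Y\) is what makes \(\Omega^a_Y(\log D_Y)\) locally free, so that this argument applies.
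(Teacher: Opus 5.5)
Your proof is correct and follows the paper's argument: the key input is the coefficient inequality \(e\lfloor cm_i\rfloor\leq\lfloor cem_i\rfloor\) along each prime divisor of \(Y\) over \(\mathscr D_i\), combined with the fact that logarithmic forms pull back to logarithmic forms. The only difference is packaging: the paper compares the divisors \(q^*\lfloor c\mathscr P\rfloor\leq\lfloor cP_Y\rfloor\) and tensors the resulting inclusion of line bundles with the logarithmic pullback map, while you check membership at height-one points and extend by normality, which also justifies the logarithmic pullback step that the paper takes for granted.
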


\begin{proof}
Since
\[
 D_Y=(q^{-1}\mathscr D)_{\red},
\]
pullback gives a natural map
\[
 q^*\Omega_{\mathscr X}^a(\log\mathscr D)
 \longrightarrow
 \Omega_Y^a(\log D_Y).
\]
It remains to compare the divisor twists.  We claim that
\[
 q^*\lfloor c\mathscr P\rfloor
 \leq
 \lfloor cP_Y\rfloor.
\]
Let \(T\) be a prime divisor of \(Y\) over \(\mathscr D_i\), and let
\(e_T\) be its ramification index.  The coefficients of these two
divisors along \(T\) are
\[
 e_T\lfloor cm_i\rfloor
 \qquad\text{and}\qquad
 \lfloor ce_Tm_i\rfloor.
\]
Since
\[
 e_T\lfloor cm_i\rfloor
 \leq
 \lfloor ce_Tm_i\rfloor,
\]
the claimed inequality follows.  Hence
\[
 \mathcal O_Y\bigl(q^*\lfloor c\mathscr P\rfloor\bigr)
 \longrightarrow
 \mathcal O_Y\bigl(\lfloor cP_Y\rfloor\bigr).
\]
Tensoring this inclusion with logarithmic pullback gives
\(q^*\mathcal L_{\mathscr X}^a(c)\to\mathcal L_Y^a(c)\) on
\(Y\).  The adjoint map is \eqref{eq:weighted-pullback}.
\end{proof}

\subsection{Trace on rational forms}

Let \(\mathscr K_{\mathscr X}\) be the sheaf of rational functions.

\begin{lemma}[Rational trace]
\label{lem:rational-trace}
For every \(a\geq0\), field trace defines a map on rational forms
\[
 \Tr_q^a:
 q_*\Omega_Y^a\otimes\mathscr K_{\mathscr X}
 \longrightarrow
 \Omega_{\mathscr X}^a\otimes\mathscr K_{\mathscr X}.
\]
This map preserves regular forms.  Thus it has a canonical
\(\cO_{\mathscr X}\)-linear restriction
\[
 \Tr_q^a:q_*\Omega_Y^a\longrightarrow\Omega_{\mathscr X}^a.
\]
We use the same symbol for the regular map and its rationalization.

The regular map and its rationalization commute with
\(\acute{e}\)tale base change.  If \(\omega\) is a rational
\(r\)-form on \(\mathscr X\) and \(\eta\) is a rational \(s\)-form on
\(Y\), then
\begin{equation}
 \Tr_q^{r+s}(q^*\omega\wedge\eta)
 =\omega\wedge\Tr_q^s(\eta).
 \label{eq:rational-trace-linearity}
\end{equation}
In particular, on rational \(a\)-forms,
\begin{equation}
 \Tr_q^a\circ q^*=\deg(q)\,\id.
 \label{eq:rational-pull-trace}
\end{equation}
For every rational \(a\)-form \(\eta\) on \(Y\), the trace also
commutes with the de Rham differential:
\begin{equation}
 \Tr_q^{a+1}(d\eta)=d\Tr_q^a(\eta).
 \label{eq:trace-d}
\end{equation}
\end{lemma}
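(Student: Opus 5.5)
The plan is to reduce to schemes and work at the generic point. Since \(q\) is finite, flat and representable, and since every statement in \cref{lem:rational-trace} is étale local on \(\mathscr X\), I will first choose an étale atlas \(T\to\mathscr X\) by a smooth connected affine scheme and set \(Y_T=Y\times_{\mathscr X}T\). Then \(q_T:Y_T\to T\) is a finite flat map of smooth schemes. On each connected component of \(T\), with function field \(K\), \(Y_T\) is generically a finite étale \(K\)-algebra \(L=\prod_j L_j\). This holds in characteristic zero because \(Y_T\) is reduced. I define
\[
\Tr^a:\Omega^a_{L/\C}=\Omega^a_{K/\C}\otimes_K L\to\Omega^a_{K/\C},\qquad \omega\otimes\ell\mapsto \operatorname{Tr}_{L/K}(\ell)\,\omega.
\]
The identification uses that \(L/K\) is étale, so \(\Omega^1_{L}=\Omega^1_K\otimes_K L\).

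The algebraic identities then follow formally. The trace is canonical: it is defined by the field trace together with the canonical isomorphism for étale extensions. So it commutes with étale base change, since trace of a finite étale algebra is compatible with flat base change, and it descends to \(\mathscr X\). The projection formula \eqref{eq:rational-trace-linearity} is \(K\)-linearity of \(\operatorname{Tr}_{L/K}\) with respect to wedging by forms pulled back from \(K\). Taking \(\eta=1\) gives \eqref{eq:rational-pull-trace}, because \(\operatorname{Tr}_{L/K}(1)=[L:K]=\deg(q)\). For \eqref{eq:trace-d}, I will write \(\eta=\sum \ell_I\,dx_I\) with \(x_I\) coordinates pulled back from \(K\). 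It then suffices to show \(\operatorname{Tr}(d\ell)=d\operatorname{Tr}(\ell)\) in \(\Omega^1_L=\Omega^1_K\otimes L\). For this I pass to a Galois closure \(M\) of each \(L_j\). There \(\operatorname{Tr}(\ell)=\sum_\sigma\sigma(\ell)\), and each embedding \(\sigma\) commutes with \(d\) by functoriality of Kähler differentials. Since \(\Omega_M=\Omega_K\otimes M\), the identity descends.

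The main obstacle is showing that the trace preserves regular forms. I plan to use that \(\Omega^a_T\) is locally free on a normal scheme, so a rational form is regular iff it is regular at all codimension-one points (Hartogs/\(S_2\)). I also use that \(q_{T*}\Omega^a_{Y_T}\) is generated over \(\cO_T\) by forms \(\ell\,dx_I\) with \(\ell\in q_{T*}\cO_{Y_T}\) and \(x_I\) functions on \(T\), at least after localizing at a height-one prime. This requires care where \(q\) ramifies.

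Cleaner is the following route. At a codimension-one point \(t\), \(A=\cO_{T,t}\) is a DVR and \(B=\cO_{Y_T}\otimes A\) is a finite flat semilocal Dedekind ring. The trace \(B\to A\) preserves integrality, since \(B\) is integral over the integrally closed \(A\). For forms, a convenient method is to use Grothendieck duality. For the finite flat lci map \(q_T\) there is the trace \(q_{T*}\omega_{Y_T}\to\omega_T\), which agrees generically with the field trace above on top forms. Combining it with the projection formula gives \(\Tr(\eta)\wedge\beta=\Tr(\eta\wedge q^*\beta)\) for all regular \((n-a)\)-forms \(\beta\) on \(T\). Since the wedge pairing \(\Omega^a\otimes\Omega^{n-a}\to\Omega^n\) is perfect, \(\Tr(\eta)\) is regular once every such top-form trace is regular. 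That in turn follows from regularity of the duality trace on canonical sheaves, since \(\Omega^n_{Y_T}=\omega_{Y_T}\) (both are smooth). I would try this duality argument first. As a fallback, I would do a direct computation at a height-one point after strict henselization: there the extension of DVRs is a tame Kummer extension \(u=t^{1/e}\), and the traces of \(u^k\,du/u\) and \(u^k\,dx\) can be checked to be regular by hand.
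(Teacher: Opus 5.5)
Your proof is correct, but it is organized differently from the paper's.

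\textbf{The paper's route.} The paper checks that \(q_X\) is finite syntomic (finite flat, and lci via the graph factorization). It then uses the Stacks Project trace \((q_X)_*\Omega^\bullet_{Y_X}\to\Omega^\bullet_X\) for finite syntomic maps. That trace is defined on regular forms in every degree, commutes with base change, satisfies the projection formula, and is a map of complexes. The paper only adds that on the finite \'etale locus it is the field trace. Regularity, descent, \eqref{eq:rational-trace-linearity}, \eqref{eq:rational-pull-trace} and \eqref{eq:trace-d} are all read off from this single input.

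\textbf{Your route.} You define the trace at the generic point and prove the formal identities there directly; your Galois-closure argument for \(\Tr(d\ell)=d\Tr(\ell)\) is fine. You isolate regularity as the only non-formal point. You reduce degree \(a\) to top degree via \(\Tr(\eta\wedge q^*\beta)=\Tr(\eta)\wedge\beta\) (the signs cancel) and the perfect pairing \(\Omega^a\otimes\Omega^{n-a}\to\Omega^n\). This reduction is a nice feature that the paper does not need.

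\textbf{What to cite.} The statement ``the duality trace \(q_{T*}\omega_{Y_T}\to\omega_T\) agrees generically with the field trace'' is itself a genuine compatibility theorem of duality theory. It depends on using the canonical isomorphism \(\Omega^n_{Y_T}\simeq q_T^{!}\Omega^n_T\). So it is a black box of the same weight as the paper's citation, and you should cite it rather than assert it.

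\textbf{Your fallback.} The fallback removes the black box entirely. After strict henselization at a height-one point, \(B\simeq A^{\mathrm{sh}}[t]/(t^e-x)\), with \(dx,dy_2,\dots,dy_n\) a basis of \(\Omega^1_A\) and \(dt,dy_2,\dots,dy_n\) a basis of \(\Omega^1_B\). By \(A^{\mathrm{sh}}\)-linearity you need only treat \(t^k\,dy_J\) and \(t^k\,dt\wedge dy_{J}\) for \(0\le k<e\). The second type gives \(\Tr(t^k\,dt\wedge dy_J)=\tfrac1e\Tr(t^{k+1})\,\tfrac{dx}{x}\wedge dy_J\). This vanishes for \(k<e-1\) and equals \(dx\wedge dy_J\) for \(k=e-1\). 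This is the same computation as \cref{lem:universal-kummer-calculation}. So on your route \cref{sec:trace} would rest on one local Kummer calculation instead of the syntomic trace.

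\textbf{Your first idea.} Generating \(q_{T*}\Omega^a_{Y_T}\) by forms \(\ell\,dx_I\) with \(x_I\) pulled back from \(T\) fails at ramified points, since \(dt\) is not of that form. You were right to drop it.
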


\begin{proof}
Work on a connected \(\acute{e}\)tale scheme chart
\(X\to\mathscr X\).  Put
\[
 q_X:Y_X:=Y\times_{\mathscr X}X\longrightarrow X.
\]
Let \(K=K(X)\).  If \(L_i\) is the function field of a connected
component of \(Y_X\), then \(L_i/K\) is finite and separable.  Hence
\[
 \Omega_{L_i/\C}^a
 \simeq L_i\otimes_K\Omega_{K/\C}^a.
\]
The rational trace is defined by
\[
 \Tr_{L_i/K}^a(b\otimes\omega)
 =\Tr_{L_i/K}(b)\omega,
\]
and by summing over \(i\).

The map \(q_X\) is finite syntomic.  Indeed, it is finite flat and of
finite presentation.  It is also a local complete intersection
morphism.  Factor it as its graph
\[
 Y_X\longrightarrow Y_X\times_{\C}X
\]
followed by the smooth projection to \(X\).  The graph is a regular
immersion because it is obtained from the diagonal of the smooth
\(\C\)-scheme \(X\) by base change.  The finite syntomic de Rham
trace therefore gives a map of complexes
\[
 \Tr_{q_X}:
 (q_X)_*\Omega_{Y_X/\C}^{\bullet}
 \longrightarrow
 \Omega_{X/\C}^{\bullet}.
\]
It commutes with base change by \cite[Tag~0FLB]{StacksProject}.  It is
a map of de Rham complexes and satisfies the projection formula by
\cite[Tag~0FLC]{StacksProject}.

Over the dense open set where \(q_X\) is finite
\(\acute{e}\)tale, this trace is given by
\[
 \Tr_{q_X}^a(b\,q_X^*\omega)
 =\Tr_{Y_X/X}(b)\omega.
\]
Its rationalization is therefore the field trace defined above.  In
particular, rational trace preserves regular forms.  Base-change
compatibility gives descent from the \(\acute{e}\)tale charts to
\(\mathscr X\).  The projection formula gives
\eqref{eq:rational-trace-linearity}.  Since
\(\Tr_q(1)=\deg(q)\), it also
gives \eqref{eq:rational-pull-trace}.  Since the trace is a map of de
Rham complexes, it gives \eqref{eq:trace-d}.
\end{proof}

By \cref{prop:weighted-pullback}, pullback already gives
\[
 q^*:\mathcal L_{\mathscr X}^a(c)
 \longrightarrow q_*\mathcal L_Y^a(c).
\]
By \eqref{eq:rational-pull-trace}, rational trace is \(\deg(q)\)
times a left inverse of this map.  Therefore the desired retraction on the
lattices will follow once we prove that rational trace restricts to
\[
 q_*\mathcal L_Y^a(c)
 \longrightarrow \mathcal L_{\mathscr X}^a(c).
\]
Thus the problem is reduced to \cref{thm:weighted-trace}.  We first
prove the local DVR statement needed for its proof.

\subsection{The local DVR calculation}
\label{subsec:local-dvr-trace}

The weighted trace theorem is a codimension-one statement.  We first
prove the local result that will be used at a prime divisor.

Let \(A\) be a DVR which is essentially of finite type over \(\C\).
Let \(x\) be a uniformizer.  Let \(C\) be a finite flat regular
\(A\)-algebra.  The ring \(C\) is a finite product of regular
semilocal rings of dimension one.  For every maximal ideal \(T\) of
\(C\), the local ring \(C_T\) is a DVR.  Let \(e_T\) be its
ramification index over \(A\).

Let \(A^{\mathrm{sh}}\) be a strict henselization of \(A\).
By \cite[Propositions~2.8.18 and~2.8.20]{FuEtaleCohomology}, there is
a finite product decomposition
\begin{equation}
 C\otimes_A A^{\mathrm{sh}}
 \simeq
 \prod_\lambda B_\lambda,
 \label{eq:sh-decomposition}
\end{equation}
where every \(B_\lambda\) is a strictly henselian DVR.  Each factor
lies over a unique maximal ideal \(T(\lambda)\) of \(C\).  The local
map
\[
 C_{T(\lambda)}\longrightarrow B_\lambda
\]
is ind-\(\acute{e}\)tale.  Thus its ramification index over
\(A^{\mathrm{sh}}\) is \(e_{T(\lambda)}\).

\begin{lemma}[Strict-henselian Kummer structure]
\label{lem:strict-henselian-kummer-structure}
For each \(\lambda\), put \(e_\lambda=e_{T(\lambda)}\).  There is an
element \(t_\lambda\in B_\lambda\) and an
\(A^{\mathrm{sh}}\)-isomorphism
\[
 B_\lambda
 \simeq
 A^{\mathrm{sh}}[T]/(T^{e_\lambda}-x),
 \qquad
 T\longmapsto t_\lambda.
\]
In particular, \(t_\lambda\) is a uniformizer and
\(t_\lambda^{e_\lambda}=x\).

Note that, if \(\bar t_\lambda\) is the image of a uniformizer of
\(C_{T(\lambda)}\), then \(t_\lambda\) and \(\bar t_\lambda\) differ
by a unit.  Hence
\[
 \Omega_{B_\lambda}^a(\log t_\lambda)
 =\Omega_{B_\lambda}^a(\log\bar t_\lambda),
 \qquad
 (t_\lambda)=(\bar t_\lambda).
\]
\end{lemma}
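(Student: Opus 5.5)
The plan is to use the standard structure theory of tamely ramified extensions of a strictly henselian DVR in residue characteristic zero. Fix \(\lambda\) and write \(B=B_\lambda\), \(e=e_\lambda\), \(R=A^{\mathrm{sh}}\).

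\textbf{Step 1: \(B/R\) is finite, local and totally ramified.} Since \(C\) is finite over \(A\), the ring \(B\) is finite over \(R\). Since \(C_{T(\lambda)}\to B\) is ind-\(\acute{e}\)tale, the residue field of \(B\) is a finite extension of the residue field of \(R\). That residue field is separably closed of characteristic zero, hence algebraically closed. So the residue extension is trivial, and \(B\) is a finite totally ramified extension of \(R\) of degree \(e\). Its ramification index over \(R\) is \(e\), by the preceding paragraph of the text.

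\textbf{Step 2: produce an exact \(e\)-th root of \(x\).} Choose a uniformizer \(s\) of \(B\), and write \(x=u s^{e}\) with \(u\in B^\times\). The residue field \(k\) of \(B\) is algebraically closed of characteristic zero, so \(e\) is invertible in \(B\). Hence \(T^e-\bar u\) is separable over \(k\) and has a root there. By Hensel's lemma in the henselian local ring \(B\), there is \(v\in B^\times\) with \(v^e=u\). Put \(t_\lambda=vs\). Then \(t_\lambda\) is a uniformizer of \(B\) and \(t_\lambda^e=x\).

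\textbf{Step 3: identify \(B\) with \(R[T]/(T^e-x)\).} Consider the \(R\)-algebra map \(\varphi:R[T]/(T^e-x)\to B\), \(T\mapsto t_\lambda\). The source is a DVR by Eisenstein's criterion, since \(x\) is a uniformizer of \(R\). It is free of rank \(e\) over \(R\). To show \(\varphi\) is surjective, apply Nakayama over \(R\). Modulo \(x\), the ring \(B/xB=B/t_\lambda^eB\) is spanned over \(k\) by \(1,t_\lambda,\dots,t_\lambda^{e-1}\), because the residue extension is trivial. Since \(B\) is finite over \(R\), Nakayama gives that \(1,\dots,t_\lambda^{e-1}\) generate \(B\) as an \(R\)-module. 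Both sides are free of rank \(e\) over \(R\): \(B\) is torsion-free and finite over a DVR, with rank \(e\cdot f=e\). So the surjection \(\varphi\) is an isomorphism.

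\textbf{Step 4: the final remark.} If \(\bar t_\lambda\) is the image of a uniformizer of \(C_{T(\lambda)}\), it is a uniformizer of \(B\), because \(C_{T(\lambda)}\to B\) is unramified, being ind-\(\acute{e}\)tale. Two uniformizers of a DVR differ by a unit, which gives \((t_\lambda)=(\bar t_\lambda)\). For the log forms, \(d\log\bar t_\lambda=d\log t_\lambda+d\log w\) for a unit \(w\), and \(d\log w\) is regular. So the two logarithmic modules coincide.

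\textbf{Main obstacle.} The delicate point is Step 1: that \(B\) is finite over \(R\) with trivial residue extension. This depends on the decomposition \eqref{eq:sh-decomposition} and on \(R\) being strictly henselian with residue field of characteristic zero. Tameness and the existence of the root \(v\) in Step 2 both rely on it. Everything after that is Hensel's lemma plus Nakayama.
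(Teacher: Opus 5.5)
Your proof is correct, but it argues at the ring level, whereas the paper argues at the level of fraction fields.

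The paper proceeds as follows:
\begin{itemize}
\item it notes that the residue extension is trivial, and deduces \([L_\lambda:K^{\mathrm{sh}}]=e_\lambda\);
\item it invokes the Kummer classification of tame totally ramified extensions of the strictly henselian field \(K^{\mathrm{sh}}\), which gives \(L_\lambda\simeq K^{\mathrm{sh}}[T]/(T^{e_\lambda}-x)\);
\item it identifies \(B_\lambda\) with the integral closure of \(A^{\mathrm{sh}}\) in \(L_\lambda\), which is the Eisenstein ring.
\end{itemize}

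You instead construct the root inside \(B\). You write \(x=us^{e}\) and extract an \(e\)-th root of the unit \(u\) by Hensel's lemma in the henselian ring \(B\). This uses that the residue field is algebraically closed and that \(e\) is invertible. You then get the presentation from Nakayama plus a rank count. So you need neither the Kummer-theory citations nor the identification of the Eisenstein ring with the integral closure.

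What each route buys:
\begin{itemize}
\item Yours is self-contained and isolates exactly what is used: trivial residue extension, \(e\) a unit, and henselianity of \(B\).
\item The paper's is shorter given the references, and it places the lemma within standard tame ramification theory.
\end{itemize}

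The final remark about \(\bar t_\lambda\) is handled the same way in both.

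One small wording point: in Step 1, finiteness of the residue extension follows from \(B\) being finite over \(R\), not from \(C_{T(\lambda)}\to B\) being ind-\'etale. This does not affect the argument.
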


\begin{proof}
Let \(K^{\mathrm{sh}}\) and \(L_\lambda\) be the fraction fields of
\(A^{\mathrm{sh}}\) and \(B_\lambda\).  The residue field extension
is trivial.  Indeed, it is finite and separable, while the residue
field of \(A^{\mathrm{sh}}\) is separably closed.  Thus
\[
 [L_\lambda:K^{\mathrm{sh}}]=e_\lambda
\]
by \cite[Proposition~8.1.1(i)]{FuEtaleCohomology}.  The extension is
totally ramified.  It is tame because the residue characteristic is
zero.  The Kummer classification gives
\[
 L_\lambda
 \simeq
 K^{\mathrm{sh}}[T]/(T^{e_\lambda}-x);
\]
see \cite[Proposition~8.1.3(i)]{FuEtaleCohomology}.  The polynomial
\(T^{e_\lambda}-x\) is Eisenstein.  Its integral model is the integral
closure of \(A^{\mathrm{sh}}\) in \(L_\lambda\).  This integral
closure is \(B_\lambda\).  This proves the first assertion.

The image \(\bar t_\lambda\) is a uniformizer because
\(C_{T(\lambda)}\to B_\lambda\) is ind-\(\acute{e}\)tale.  Any two
uniformizers of a DVR differ by a unit.  If
\(t_\lambda=w\bar t_\lambda\), then
\[
 d\log t_\lambda=d\log\bar t_\lambda+d\log w,
 \qquad
 d\log w\in\Omega_{B_\lambda}^1.
\]
This proves the last two equalities.
\end{proof}

\begin{lemma}[Universal Kummer calculation]
\label{lem:universal-kummer-calculation}
Put
\[
 R=\C[u]_{(u)},
 \qquad
 S=\C[v]_{(v)},
\]
and define \(R\to S\) by \(u\mapsto v^e\).  Then the logarithmic
pullback map is an isomorphism
\begin{equation}
 S\otimes_R\Omega_R^1(\log u)
 \simeq
 \Omega_S^1(\log v).
 \label{eq:kummer-log-etale}
\end{equation}
Let \(F=\C(u)\) and \(E=\C(v)\).  For every \(k\in\Z\),
\begin{equation}
 \Tr_{E/F}(v^{-k}S)
 =u^{-\lfloor k/e\rfloor}R.
 \label{eq:universal-kummer-ideal}
\end{equation}
\end{lemma}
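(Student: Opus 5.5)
The plan is to prove the two assertions separately, both by explicit computation in the bases \(\{1\}\cup\{du/u\}\) and \(\{v^j\}_{0\le j<e}\).

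For \eqref{eq:kummer-log-etale}, note that \(u=v^e\) gives
\[
 \frac{du}{u}=e\,\frac{dv}{v}
\]
in \(\Omega_E^1\). Since \(e\) is invertible in \(\C\), this is the whole argument. The module \(\Omega_R^1(\log u)\) is free of rank one on \(du/u\), and \(\Omega_S^1(\log v)\) is free of rank one on \(dv/v\). Hence the pullback sends a generator to a unit multiple of a generator, so \(S\otimes_R\Omega_R^1(\log u)\to\Omega_S^1(\log v)\) is an isomorphism of free \(S\)-modules of rank one.

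For \eqref{eq:universal-kummer-ideal}, I will use that \(S\) is free over \(R\) with basis \(1,v,\dots,v^{e-1}\). This holds because \(S\) is the localization of \(\C[v]=\bigoplus_{j<e}\C[u]v^j\), and the local ring at \(v\) coincides with the localization at \(u\), since \((v)\) is the only prime over \((u)\). The field trace satisfies \(\Tr_{E/F}(v^j)=0\) for \(0<j<e\), because \(v^j\) is a root of \(T^{e/\gcd}-u^{j/\gcd}\) and the conjugates \(\zeta^j v^j\) sum to zero. It also satisfies \(\Tr_{E/F}(1)=e\). More generally, \(\Tr(v^n)=e\,u^{n/e}\) if \(e\mid n\), and \(\Tr(v^n)=0\) otherwise.

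Write \(v^{-k}S\) as the \(R\)-span of \(v^n\) for \(n\ge -k\). By the preceding formula, \(\Tr(v^{-k}S)\) is the \(R\)-span of \(u^{m}\) for \(m\) running over integers with \(em\ge -k\), that is, \(m\ge\lceil -k/e\rceil=-\lfloor k/e\rfloor\). Here \(e\) is invertible. To justify the \(R\)-span claim I use \(R\)-linearity of trace: an element of \(v^{-k}S\) equals \(v^{-k}\sum_{j<e} r_j v^j\) with \(r_j\in R\), and \(\Tr\) picks out the terms with \(e\mid j-k\), each multiplied by \(e\). Since \(R\) is a DVR, the span is the fractional ideal \(u^{-\lfloor k/e\rfloor}R\). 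The inclusion \(\supseteq\) follows from \(\Tr(v^{-e\lfloor k/e\rfloor})=e\,u^{-\lfloor k/e\rfloor}\), using \(-e\lfloor k/e\rfloor\ge -k\).

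The only mildly delicate point is the vanishing \(\Tr(v^n)=0\) for \(e\nmid n\). I will handle it by computing the trace through the \(\mu_e\)-action. The extension \(E/F\) is Galois with group \(\mu_e\) acting by \(v\mapsto\zeta v\). Hence
\[
 \Tr(v^n)=\Bigl(\sum_{\zeta\in\mu_e}\zeta^n\Bigr)v^n,
\]
and this sum is \(e\) or \(0\) according to whether \(e\mid n\).
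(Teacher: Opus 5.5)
Your proposal is correct and follows essentially the same route as the paper: \(d\log u=e\,d\log v\) with \(e\in\C^\times\) gives the logarithmic isomorphism, and the \(\mu_e\)-Galois computation of \(\Tr_{E/F}(v^n)\), applied to the \(R\)-basis \(v^{-k},\dots,v^{e-1-k}\) of \(v^{-k}S\) (in which only the exponent \(-e\lfloor k/e\rfloor\) is divisible by \(e\)), gives the trace ideal. Your one addition is the justification that \(S\) is free over \(R\) with basis \(1,v,\dots,v^{e-1}\): since \((v)\) is the only prime of \(\C[v]\) over \((u)\), localizing \(\C[v]\) away from \((u)\) already gives \(S\), a step the paper asserts without proof.
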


\begin{proof}
The identity
\[
 d\log u=e\,d\log v
\]
gives \eqref{eq:kummer-log-etale}, since \(e\) is invertible in
\(\C\).

Let \(\zeta\in\C\) be a primitive \(e\)-th root of unity.  The
extension \(E/F\) is Galois, with conjugates
\(v,\zeta v,\ldots,\zeta^{e-1}v\).  Hence
\[
 \Tr_{E/F}(v^j)
 =
 \begin{cases}
  0,&e\nmid j,\\
  e u^{j/e},&e\mid j.
 \end{cases}
\]
The fractional ideal \(v^{-k}S\) is free over \(R\), with basis
\[
 v^{-k},v^{1-k},\ldots,v^{e-1-k}.
\]
Among these exponents, the unique multiple of \(e\) is
\(-e\lfloor k/e\rfloor\).  Its trace is
\(e u^{-\lfloor k/e\rfloor}\).  Since \(e\in\C^\times\), this proves
\eqref{eq:universal-kummer-ideal}.
\end{proof}

Let \(K\) be the fraction field of \(A\).  Since \(C\) is finite and
regular, its generic algebra is a product of finite separable field
extensions:
\[
 C_K:=C\otimes_AK=\prod_j L_j.
\]
We write
\[
 \Tr_{C_K/K}^a:=\sum_j\Tr_{L_j/K}^a.
\]
Let
\[
 D_C:=\sum_T T, \qquad  P_C:=m\sum_Te_TT,
\]
where \(T\) runs over the maximal ideals of \(C\), \(m\geq0\).

\begin{lemma}[Semilocal DVR trace]
\label{lem:semilocal-dvr-trace}
For every \(a\geq0\), \(m\geq0\), and \(c\in\Q\),
\begin{equation}
 \Tr_{C_K/K}^a\!\left(
  \Omega_C^a(\log D_C)(\lfloor cP_C\rfloor)
 \right)
 \subseteq
 \Omega_A^a(\log(x))(\lfloor cm\rfloor(x)).
 \label{eq:semilocal-dvr-log}
\end{equation}
\end{lemma}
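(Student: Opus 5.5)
Our plan is to base change to the strict henselization and reduce to the universal Kummer calculation of \cref{lem:universal-kummer-calculation}.

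\emph{Reduction.} The target $M:=\Omega_A^a(\log(x))(\lfloor cm\rfloor(x))$ is a free $A$-lattice in $\Omega_K^a$. Since $A\to A^{\mathrm{sh}}$ is faithfully flat, an element of $\Omega_K^a$ lies in $M$ if and only if its image lies in $M\otimes_AA^{\mathrm{sh}}$. Trace commutes with flat base change, and the logarithmic lattices on $C$ commute with the ind-\'etale base change $C\to C\otimes_AA^{\mathrm{sh}}$. Hence, using \eqref{eq:sh-decomposition}, it suffices to prove the analogous inclusion for each factor $B_\lambda$ over $A^{\mathrm{sh}}$, with trace $\Tr_{L_\lambda/K^{\mathrm{sh}}}$. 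On $B_\lambda$ the divisor $P_C$ becomes $me_\lambda(t_\lambda)$, since $T(\lambda)$ has coefficient $me_{T(\lambda)}$ and the map $C_{T(\lambda)}\to B_\lambda$ is unramified. By the last part of \cref{lem:strict-henselian-kummer-structure}, the log lattice can be written using $t_\lambda$. There is one point to check: the fraction field of $C\otimes_AA^{\mathrm{sh}}$ is the corresponding base change of $C_K$, and the trace of $C_K/K$ becomes the sum of the factor traces. This holds because trace is compatible with base change of finite free algebras.

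\emph{The Kummer case.} Now $B=A'[t]/(t^e-x)$ with $A'=A^{\mathrm{sh}}$. By \eqref{eq:kummer-log-etale}, which is the same computation $d\log x=e\,d\log t$ over $A'$, we get
\[
\Omega_B^a(\log t)=B\otimes_{A'}\Omega_{A'}^a(\log x).
\]
So every element of the source lattice $t^{-\lfloor cme\rfloor}\Omega_B^a(\log t)$ is a sum of terms $b\,\omega$ with $\omega\in\Omega_{A'}^a(\log x)$ and $b\in t^{-\lfloor cme\rfloor}B$. By the projection formula \eqref{eq:rational-trace-linearity}, in its field-trace form, we have $\Tr(b\,\omega)=\Tr_{L/K'}(b)\,\omega$. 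The free basis $t^{-k},\dots,t^{e-1-k}$ argument of \cref{lem:universal-kummer-calculation} works verbatim over $A'$ with $u=x$ and $v=t$. It gives
\[
\Tr(t^{-k}B)=x^{-\lfloor k/e\rfloor}A', \qquad k=\lfloor cme\rfloor.
\]

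\emph{Floor identity.} It remains to check $\lfloor\lfloor cme\rfloor/e\rfloor=\lfloor cm\rfloor$. This is the standard identity $\lfloor\lfloor y\rfloor/n\rfloor=\lfloor y/n\rfloor$ applied with $y=cme$ and $n=e$. Combining the two previous steps, the trace lands in $x^{-\lfloor cm\rfloor}\Omega_{A'}^a(\log x)$, which is the base change of $M$. This proves \eqref{eq:semilocal-dvr-log}.

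The main obstacle is the descent and compatibility bookkeeping in the reduction step. One must check that trace and the log lattices commute with $A\to A^{\mathrm{sh}}$, and that membership in $M$ can be tested after this faithfully flat extension, i.e.\ that $M\otimes_AA^{\mathrm{sh}}\cap\Omega_K^a=M$. The Kummer computation itself is routine.
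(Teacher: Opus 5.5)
Your proposal is correct and follows essentially the same route as the paper: faithfully flat descent along \(A\to A^{\mathrm{sh}}\), splitting into the Kummer factors \(B_\lambda\), the identity \(\Omega_{B_\lambda}^a(\log t_\lambda)=B_\lambda\otimes\Omega_{A^{\mathrm{sh}}}^a(\log x)\) from \(d\log x=e_\lambda\,d\log t_\lambda\), the projection formula, the Kummer trace ideal computation, and the floor identity \(\lfloor\lfloor cme\rfloor/e\rfloor=\lfloor cm\rfloor\). The only cosmetic difference is that you rerun the basis computation of \cref{lem:universal-kummer-calculation} directly over \(A^{\mathrm{sh}}\), whereas the paper obtains it by flat base change along \(u\mapsto x\); either works.
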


\begin{proof}
We check the containment after the faithfully flat ind-\(\acute{e}\)tale
base change \(A\to A^{\mathrm{sh}}\).  By
\eqref{eq:sh-decomposition}, the source then splits into the lattices
\[
 t_\lambda^{-k_\lambda}
 \Omega_{B_\lambda}^a(\log t_\lambda),
 \qquad
 k_\lambda:=\lfloor ce_\lambda m\rfloor.
\]
Here we used \cref{lem:strict-henselian-kummer-structure} to replace
the image of a local uniformizer by \(t_\lambda\).

Fix one factor.  The presentation
\[
 B_\lambda
 \simeq
 A^{\mathrm{sh}}[t_\lambda]/(t_\lambda^{e_\lambda}-x)
\]
and the relation
\(d\log x=e_\lambda d\log t_\lambda\) give
\begin{equation}
 \Omega_{B_\lambda}^a(\log t_\lambda)
 \simeq
 B_\lambda\otimes_{A^{\mathrm{sh}}}
 \Omega_{A^{\mathrm{sh}}}^a(\log x).
 \label{eq:kummer-log-base-change}
\end{equation}
This is also the base change of the Kummer logarithmic calculation
\eqref{eq:kummer-log-etale}.

Take \(e=e_\lambda\) in \cref{lem:universal-kummer-calculation}.  The
map
\[
 R\longrightarrow A^{\mathrm{sh}},
 \qquad
 u\longmapsto x,
\]
is flat.  The Kummer isomorphism identifies
\(B_\lambda\) with \(A^{\mathrm{sh}}\otimes_RS\), with
\(v\mapsto t_\lambda\).  Finite locally free trace commutes with base
change.  Thus \eqref{eq:universal-kummer-ideal} gives
\begin{equation}
 \Tr_{L_\lambda/K^{\mathrm{sh}}}
 \left(t_\lambda^{-k_\lambda}B_\lambda\right)
 =x^{-\lfloor k_\lambda/e_\lambda\rfloor}A^{\mathrm{sh}}.
 \label{eq:kummer-trace-after-base-change}
\end{equation}
By the projection formula \eqref{eq:rational-trace-linearity},
\begin{align*}
 &\Tr_{L_\lambda/K^{\mathrm{sh}}}^a\!\left(
  t_\lambda^{-k_\lambda}
  \Omega_{B_\lambda}^a(\log t_\lambda)
 \right)
 \subseteq
 x^{-\lfloor k_\lambda/e_\lambda\rfloor}
 \Omega_{A^{\mathrm{sh}}}^a(\log x)
 =
 x^{-\lfloor cm\rfloor}
 \Omega_{A^{\mathrm{sh}}}^a(\log x).
\end{align*}
For the last equality, we used
\[
 \left\lfloor\frac{\lfloor eu\rfloor}{e}\right\rfloor
 =\lfloor u\rfloor
 \qquad(e\geq1).
\]

After base change, the total trace is the sum of the traces on the
factors \(B_\lambda\).  The same containment therefore holds for the
total trace.  Faithful flatness descends it to \(A\).  This proves
\eqref{eq:semilocal-dvr-log}.
\end{proof}

\subsection{The weighted logarithmic trace}

\begin{theorem}[Weighted logarithmic trace]
\label{thm:weighted-trace}
For every \(a\geq0\) and every \(c\in\Q\), rational trace restricts
to an \(\cO_{\mathscr X}\)-linear map
\begin{equation}
 \Tr_{q,c}^a:
 q_*\mathcal L_Y^a(c)
 \longrightarrow
 \mathcal L_{\mathscr X}^a(c).
 \label{eq:weighted-trace}
\end{equation}
\end{theorem}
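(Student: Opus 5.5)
The plan is to reduce \cref{thm:weighted-trace} to the codimension-one statement \cref{lem:semilocal-dvr-trace}, using that the target is a locally free sheaf on a normal stack.

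First, I will pass to a connected \(\acute{e}\)tale scheme chart \(X\to\mathscr X\). This is allowed because the lattices \(\mathcal L^a\) and the rational trace of \cref{lem:rational-trace} both commute with \(\acute{e}\)tale base change. The base change \(Y_X\to X\) is again finite flat, \(Y_X\) is smooth, and \((q_X^{-1}D_X)_{\red}\) is the base change of \(D_Y\), hence SNC. On the chart, \(\mathcal L_X^a(c)\) is locally free on the smooth, hence normal, scheme \(X\). Therefore a rational section lies in it if and only if it does so at every codimension-one point, by reflexivity (Hartogs). So it suffices to show that for \(\eta\) a local section of \((q_X)_*\mathcal L_{Y_X}^a(c)\), the rational form \(\Tr^a(\eta)\) lies in the stalk of \(\mathcal L_X^a(c)\) at every height-one point \(\xi\in X\).

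Next, I split into two kinds of height-one points.

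If \(\xi\notin D_X\), the lattice at \(\xi\) is just \(\Omega^a_{X,\xi}\). Above \(\xi\) there are no components of \(D_Y\) or \(P_Y\), so \(\eta\) is regular near the fibre. The regularity part of \cref{lem:rational-trace} then gives \(\Tr^a(\eta)\in\Omega^a_{X,\xi}\).

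If \(\xi\) is the generic point of some \(D_i\), put \(A=\cO_{X,\xi}\), a DVR with uniformizer \(x\) a local equation of \(D_i\). Put \(C=(q_*\cO_{Y})_\xi\). Then \(C\) is finite flat over \(A\), and it is regular because \(Y\) is smooth. Its maximal ideals \(T\) are the generic points of the prime divisors of \(Y\) over \(D_i\), and these are exactly the components of \(D_Y\) meeting the fibre. The coefficient of \(P_Y\) along \(T\) is \(e_Tm_i\), so the localized source is \(\Omega_C^a(\log D_C)(\lfloor cP_C\rfloor)\) with \(m=m_i\). At \(\xi\), the only divisors through \(\xi\) are \(D_i\) and, upstairs, the \(T\)'s. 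Moreover, \(\Omega^a_X(\log D)_\xi=\Omega^a_A(\log x)\), interpreting \(\Omega_A\) as the stalk of \(\Omega_X\) and not as Kähler differentials of \(A\) over \(\C\) in a bad sense. Then \cref{lem:semilocal-dvr-trace} gives exactly the required containment in \(\Omega_A^a(\log(x))(\lfloor cm_i\rfloor(x))\).

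Combining the two cases, \(\Tr^a(\eta)\) is a rational section of the locally free sheaf \(\mathcal L_X^a(c)\) that is regular at all codimension-one points, hence a section. The maps agree on overlaps of charts because trace commutes with base change, so they descend to \(\mathscr X\). The result is \(\cO_{\mathscr X}\)-linear by the projection formula \eqref{eq:rational-trace-linearity}.

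I expect the main obstacle to be the identification of stalks in the second case. One must check that \(\Omega^a_{Y}(\log D_Y)\) localized at the semilocal ring \(C\) agrees with the \(\Omega_C^a(\log D_C)\) used in \cref{lem:semilocal-dvr-trace}. This needs that the components of \(D_Y\) through the fibre over \(\xi\) are precisely the \(T\)'s, with reduced structure. It also needs the rank \(\dim\mathscr X\) module of differentials of the essentially finite type algebras. I would handle this by noting that both sides are the localization of a locally free sheaf, so they commute with localization and with the ind-\(\acute{e}\)tale base change used inside the lemma.
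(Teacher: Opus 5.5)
Your proposal is correct and follows essentially the same route as the paper's proof: pass to a connected \'etale chart \(X\to\mathscr X\), reduce to height-one points because \(\mathcal L_X^a(c)\) is locally free on the regular scheme \(X\), treat non-boundary points by the regularity of trace in \cref{lem:rational-trace} and boundary points by \cref{lem:semilocal-dvr-trace}, and then descend along the \'etale groupoid. One cosmetic correction: \(C\) should be the semilocal ring of \(Y_X\) (not \(Y\)) over the generic point of a component of the pullback of \(\mathscr D_i\) to \(X\), and that pullback may have several components.
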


\begin{proof}
The question is \(\acute{e}\)tale local on \(\mathscr X\).  Choose a
connected \(\acute{e}\)tale scheme chart \(X\to\mathscr X\).  Put
\[
 q_X:Y_X:=Y\times_{\mathscr X}X\longrightarrow X.
\]
Since \(X\) is regular, it is enough to check the required
containment at every codimension-one point of \(X\).  We use the same
notation for the pullbacks of \(\mathscr D\) and \(\mathscr P\) to
\(X\).

Let \(H\subset X\) be a prime divisor.  Put
\[
 A=\cO_{X,\eta_H},
 \qquad
 K=K(X).
\]
The base change of \(q\) to \(\operatorname{Spec} A\) has the form
\[
 \operatorname{Spec} C
 :=Y_X\times_X\operatorname{Spec} A
 \longrightarrow\operatorname{Spec} A.
\]
The algebra \(C\) is finite flat over \(A\).  It is regular because
it is a localization of the regular scheme \(Y_X\).  Thus \(C\) is
exactly of the form used in \cref{lem:semilocal-dvr-trace}.  The
localization of rational trace at \(\eta_H\) is the total trace from
the generic algebra \(C_K\) to \(K\).

Suppose first that \(H\) is a component of the boundary.  Let \(x\)
be a uniformizer of \(A\), and let \(m\) be the multiplicity of
\(\mathscr P\) along \(H\).  The maximal ideals \(T\) of \(C\)
correspond to the prime divisors of \(Y_X\) over \(H\).  If \(e_T\)
is the ramification index, then the multiplicity of \(P_Y\) along
\(T\) is \(e_Tm\).  Over \(\operatorname{Spec} C\), the reduced boundary and the
polar divisor are therefore
\[
 D_C=\sum_TT,
 \qquad
 P_C=m\sum_Te_TT.
\]
The required containment at \(\eta_H\) is now precisely
\eqref{eq:semilocal-dvr-log}.

Suppose next that \(H\) is not a boundary component.  Then the source
lattice is the regular differential lattice at every point over
\(\eta_H\).  The regularity statement in
\cref{lem:rational-trace} gives
\[
 \Tr_{C_K/K}^a(\Omega_C^a)\subseteq\Omega_A^a.
\]
This also covers ramification over \(H\).

Thus rational trace lies in \(\mathcal L_{\mathscr X}^a(c)\) at every
codimension-one point of \(X\).  Since this lattice is locally free,
the containment holds on \(X\).  The construction is compatible with
the \(\acute{e}\)tale groupoid.  It therefore descends to
\(\mathscr X\) and gives \eqref{eq:weighted-trace}.
\end{proof}

\begin{proposition}[Filtered pullback and trace]
\label{prop:kontsevich-pull-trace}
Let \((\mathscr X,\mathscr D,\bar w)\) be a good compactification,
and put \(g=\bar w\circ q\).  For every
\(\alpha\in\Q\cap[0,1)\), pullback and trace give filtered chain maps
\begin{align*}
 q^*&:
 K_{\mathscr X,\bar w}(\alpha)
 \longrightarrow q_*K_{Y,g}(\alpha),\\
 \Tr_q&:
 q_*K_{Y,g}(\alpha)
 \longrightarrow K_{\mathscr X,\bar w}(\alpha).
\end{align*}
They satisfy
\begin{equation}
 \Tr_q\circ q^*
 =\deg(q)\,\id_{K_{\mathscr X,\bar w}(\alpha)}.
 \label{eq:kontsevich-pull-trace}
\end{equation}
Hence \(\deg(q)^{-1}\Tr_q\) is a filtered retraction of \(q^*\), and
\(K_{\mathscr X,\bar w}(\alpha)\) is a filtered direct summand of
\(q_*K_{Y,g}(\alpha)\).
\end{proposition}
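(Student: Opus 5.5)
Our plan is to deduce everything from the lattice statements \cref{prop:weighted-pullback} and \cref{thm:weighted-trace} at the level \(c=\alpha\), using \cref{lem:rational-trace} for the algebraic identities. Two preliminary points matter. First, since \(P_Y=q^*\mathscr P=g^*(\infty)\), the rounded lattices \(\mathcal L_Y^a(c)\) used in \cref{sec:trace} are exactly the ones defining \(\Omega_{Y,g}^a(\alpha)\). Second, both \(q^*\) and \(\Tr_q\) are defined on meromorphic forms \(q_*\Omega_Y^a(*D_Y)\) and \(\Omega_{\mathscr X}^a(*\mathscr D)\). Indeed, \(\Omega_Y^a(*D_Y)=\Omega_Y^a\otimes\cO_Y(*q^*\mathscr D)\), so the projection formula \eqref{eq:rational-trace-linearity}, applied with powers of a local equation of \(\mathscr D\), shows that the rational trace sends meromorphic forms with poles along \(D_Y\) to meromorphic forms with poles along \(\mathscr D\). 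Alternatively, this follows from \cref{thm:weighted-trace} by letting \(c\to\infty\).

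\textbf{Compatibility with \(\nabla\).} We show that both maps commute with \(\nabla=d+d\bar w\wedge\) on meromorphic forms. For pullback this is functoriality: \(q^*d=dq^*\) and \(q^*(d\bar w\wedge\omega)=dg\wedge q^*\omega\). For trace, \eqref{eq:trace-d} handles \(d\). Since \(dg=q^*d\bar w\), the projection formula \eqref{eq:rational-trace-linearity} with \(r=1\) gives
\[
 \Tr_q(dg\wedge\eta)=d\bar w\wedge\Tr_q(\eta).
\]
Hence \(\Tr_q\nabla_g=\nabla_{\bar w}\Tr_q\).

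\textbf{Kontsevich lattices are preserved.} Take \(\omega\in\Omega^a_{\mathscr X,\bar w}(\alpha)\). Then \(\omega\in\mathcal L^a_{\mathscr X}(\alpha)\) and \(\nabla\omega\in\mathcal L^{a+1}_{\mathscr X}(\alpha)\). By \cref{prop:weighted-pullback}, \(q^*\omega\in q_*\mathcal L_Y^a(\alpha)\) and \(\nabla_g q^*\omega=q^*\nabla\omega\in q_*\mathcal L_Y^{a+1}(\alpha)\). So \(q^*\omega\) lies in \(q_*\Omega^a_{Y,g}(\alpha)\), by the kernel description \eqref{eq:kontsevich-lattice}.

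Symmetrically, take \(\eta\) a local section of \(q_*\Omega^a_{Y,g}(\alpha)\). Then \(\eta\in q_*\mathcal L_Y^a(\alpha)\) and \(\nabla_g\eta\in q_*\mathcal L_Y^{a+1}(\alpha)\). By \cref{thm:weighted-trace}, \(\Tr_q\eta\in\mathcal L_{\mathscr X}^a(\alpha)\) and \(\nabla\Tr_q\eta=\Tr_q\nabla_g\eta\in\mathcal L_{\mathscr X}^{a+1}(\alpha)\).

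Thus both maps restrict to chain maps between the Kontsevich complexes. They preserve degree, so they are automatically compatible with the stupid filtrations \(\sigma_{\geq p}\). The identity \eqref{eq:kontsevich-pull-trace} is then \eqref{eq:rational-pull-trace} restricted to these subsheaves.

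\textbf{Retraction and direct summand.} The splitting is formal. Put \(e=\deg(q)^{-1}\Tr_q\circ\ldots\), more precisely \(e=q^*\circ\deg(q)^{-1}\Tr_q\). This is a filtered idempotent chain endomorphism of \(q_*K_{Y,g}(\alpha)\). Its image is identified with \(K_{\mathscr X,\bar w}(\alpha)\) via \(q^*\), with \(\ker e\) as a filtered complement.

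The main point needing care is the handling of meromorphic poles in the kernel description. The quotient \(\Omega^{a+1}(*\mathscr D)/\mathcal L^{a+1}(\alpha)\) requires \(\Tr_q\) to be defined on forms with arbitrary poles along \(D_Y\), and to be \(\nabla\)-compatible there. We expect this to follow from \cref{lem:rational-trace}, since everything happens inside rational forms, where trace, \(d\), and the projection formula are already established.
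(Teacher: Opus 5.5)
Your proposal is correct and follows the paper's argument: lattice preservation from \cref{prop:weighted-pullback} and \cref{thm:weighted-trace} at $c=\alpha$ (in degrees $a$ and $a+1$), then $\Tr_q\nabla_g=\nabla_{\bar w}\Tr_q$ from \eqref{eq:trace-d} and the projection formula with $dg=q^*d\bar w$, then preservation of the defining kernels, degree-preservation for the stupid filtration, and restriction of \eqref{eq:rational-pull-trace}. The worry about meromorphic poles is unnecessary, since every form you trace already lies in some $\mathcal L_Y^{b}(\alpha)$ and the trace is defined on all rational forms; note also that your alternative ``$c\to\infty$'' justification would miss poles along components with $m_i=0$, although your projection-formula argument does handle them.
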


\begin{proof}
By \cref{prop:weighted-pullback,thm:weighted-trace}, pullback and trace
preserve the rounded logarithmic lattices.  For rational forms
\(\omega\) and \(\eta\), they also satisfy
\[
 \nabla_g(q^*\omega)=q^*(\nabla_{\bar w}\omega),
 \qquad
 \nabla_{\bar w}\Tr_q(\eta)
 =\Tr_q(\nabla_g\eta).
\]
The second identity follows from \cref{lem:rational-trace}, the
projection formula, and \(dg=q^*(d\bar w)\).  Thus both maps preserve
the defining kernels of the Kontsevich lattices.  They preserve form
degree, so they preserve the stupid filtrations.  Finally,
\eqref{eq:kontsevich-pull-trace} is the restriction of
\eqref{eq:rational-pull-trace}.  
\end{proof}

\begin{corollary}[Global filtered retraction]
\label{cor:global-filtered-retraction}
For every \(\alpha\in\Q\cap[0,1)\), pullback and normalized trace
give filtered maps
\[
 R\Gamma\!\left(\mathscr X,K_{\mathscr X,\bar w}(\alpha)\right)
 \xrightarrow{\ i\ }
 R\Gamma\!\left(Y,K_{Y,g}(\alpha)\right)
 \xrightarrow{\ r\ }
 R\Gamma\!\left(\mathscr X,K_{\mathscr X,\bar w}(\alpha)\right)
\]
such that \(ri=\id\).  Thus the first filtered complex is a filtered
retract of the second one.
\end{corollary}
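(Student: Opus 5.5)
The plan is to apply derived global sections to the sheaf-level statement of \cref{prop:kontsevich-pull-trace}, keeping the stupid filtrations and their graded pieces in view. Since \(q\) is finite and representable, \(q_*\) is exact on quasi-coherent sheaves on the lisse--\'etale site. Hence \(R\Gamma(\mathscr X,q_*\mathcal F)\simeq R\Gamma(Y,\mathcal F)\) for every quasi-coherent \(\mathcal F\) on \(Y\), and this identification is functorial. We apply it termwise to the bounded complex \(K_{Y,g}(\alpha)\) with its stupid filtration \(\sigma_{\geq p}\). The terms \(\Omega^p_{Y,g}(\alpha)\) are locally free, so \(q_*\) commutes with forming each \(\sigma_{\geq p}\) and each graded piece. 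This gives filtered quasi-isomorphisms
\[
 R\Gamma\bigl(\mathscr X,q_*\sigma_{\geq p}K_{Y,g}(\alpha)\bigr)
 \simeq
 R\Gamma\bigl(Y,\sigma_{\geq p}K_{Y,g}(\alpha)\bigr)
\]
for all \(p\), compatible with the inclusions as \(p\) varies.

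Next we define the maps. Let \(i\) be \(R\Gamma\) of the filtered chain map \(q^*\) composed with this identification. Let \(r\) be the inverse identification composed with \(R\Gamma\) of \(\deg(q)^{-1}\Tr_q\); this is legitimate since \(\deg(q)\) is invertible in \(\C\). Both maps are built from honest filtered morphisms of complexes of sheaves. We can therefore realize them on a single model, for instance by applying a filtered Godement or \v{C}ech resolution functorially to the underlying filtered complexes. This makes \(i\) and \(r\) morphisms in the filtered derived category, or even maps of filtered complexes.

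The identity \(ri=\id\) then follows from functoriality. The sheaf-level identity \eqref{eq:kontsevich-pull-trace} says that \(\deg(q)^{-1}\Tr_q\circ q^*=\id\) as filtered chain maps on \(K_{\mathscr X,\bar w}(\alpha)\). The identification \(R\Gamma(\mathscr X,q_*-)\simeq R\Gamma(Y,-)\) enters once in \(i\) and once, inverted, in \(r\), so the two occurrences cancel. Applying the functor \(R\Gamma(\mathscr X,-)\) to the sheaf-level identity gives \(ri=\id\) on every \(\sigma_{\geq p}\) and on every graded piece.

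The only point needing care is the identification \(R\Gamma(\mathscr X,q_*-)\simeq R\Gamma(Y,-)\) for stacks. One has to check that \(Rq_*=q_*\) on quasi-coherent sheaves for a finite representable morphism, and that this is compatible with lisse--\'etale cohomology. This reduces to the scheme case on a smooth atlas, because \(q\) is affine and representable and higher direct images are computed smooth-locally. I expect this to be routine rather than a genuine obstacle.
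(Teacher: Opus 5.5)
Your proposal is correct and follows essentially the paper's route. The paper uses the same ingredients: the terms of \(K_{Y,g}(\alpha)\) are locally free, hence \(q_*\)-acyclic for the finite map \(q\), so \(Rq_*K_{Y,g}(\alpha)\simeq q_*K_{Y,g}(\alpha)\) compatibly with the stupid filtrations, and one then applies \(R\Gamma(\mathscr X,-)\) to the filtered maps of \cref{prop:kontsevich-pull-trace} and divides the trace by \(\deg(q)\). The paper also remarks explicitly that this termwise computation remains valid although the differential is only \(\C\)-linear, which is exactly what your termwise argument on the bounded complex relies on.
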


\begin{proof}
By \cite[Proposition~3.5(i)]{WangCompactification}, 
every term of \(K_{Y,g}(\alpha)\) is
locally free.  Since \(q\) is finite, these terms are
\(q_*\)-acyclic; see \cite[Tags~07AY and~01XB]{StacksProject}.
Termwise pushforward therefore computes the derived pushforward of
this bounded complex.  This remains true although its differential is
only \(\C\)-linear.  Hence
\[
 Rq_*K_{Y,g}(\alpha)\simeq q_*K_{Y,g}(\alpha),
 \qquad
 R\Gamma\!\left(\mathscr X,q_*K_{Y,g}(\alpha)\right)
 \simeq R\Gamma\!\left(Y,K_{Y,g}(\alpha)\right).
\]
These isomorphisms respect the finite stupid filtrations.  Now apply
derived global sections to the maps in
\cref{prop:kontsevich-pull-trace}, and divide trace by \(\deg(q)\).
\end{proof}

If \(\mathscr X\) is not connected, apply this construction to each
connected component.  Equivalently, use the locally constant rank of
\(q_*\cO_Y\) and divide by that rank on each component.
 \section{Proofs of the degeneration theorems}
\label{sec:projective}

\subsection{Finite descent}

\begin{theorem}[Finite descent of degeneration]
\label{thm:finite-descent}
Let \((\mathscr X,\mathscr D,\bar w)\) be a good stack
compactification.  Let
\[
 q:Y\longrightarrow\mathscr X
\]
be finite flat and surjective.  Assume that \(Y\) is smooth and that
\((q^{-1}\mathscr D)_{\red}\) is SNC.  Put
\(g=\bar w\circ q\).  Fix \(\alpha\in\Q\cap[0,1)\).

If the Kontsevich spectral sequence on \(Y\) degenerates at \(E_1\),
then the Kontsevich spectral sequence on \(\mathscr X\) degenerates at
\(E_1\).
\end{theorem}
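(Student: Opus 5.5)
The plan is to use the filtered retraction of \cref{cor:global-filtered-retraction} together with a dimension-count characterization of \(E_1\)-degeneration. First I verify that the hypotheses of \cref{sec:trace} hold. We may work one connected component of \(\mathscr X\) at a time. Then \(\mathscr X\) is smooth and connected, \(\boldsymbol{\mathscr D}\) is labelled SNC, and \(\mathscr P=\bar w^*(\infty)\) is effective and supported on \(\mathscr D\). The map \(q\) is finite flat surjective, \(Y\) is smooth, and \(D_Y\) is SNC. So \cref{prop:kontsevich-pull-trace} and \cref{cor:global-filtered-retraction} apply. They give filtered maps \(i\) and \(r\) between \(R\Gamma(\mathscr X,K_{\mathscr X,\bar w}(\alpha))\) and \(R\Gamma(Y,K_{Y,g}(\alpha))\), with \(ri=\id\). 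Here both complexes carry their stupid filtrations.

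Next I use the following standard fact. For a finite filtered complex \((M,F)\) over \(\C\) with finite-dimensional cohomology, degeneration at \(E_1\) is equivalent to strictness. Strictness means that for every \(p\) and \(n\) the map \(H^n(F^pM)\to H^n(M)\) is injective. Equivalently, every connecting map \(H^n(\operatorname{gr}^p_F M)\to H^{n+1}(F^{p+1}M)\) is zero. I will work with this injectivity formulation. The filtered maps \(i\) and \(r\) induce maps on \(H^n(F^p-)\) and on \(H^n(-)\), and these are compatible with the natural maps \(H^n(F^p-)\to H^n(-)\). Here \(F^p\) is \(R\Gamma\) of \(\sigma_{\geq p}\), and all identifications are the ones made in the corollary.

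The main step is a diagram chase. Suppose \(x\in H^n(F^pR\Gamma(\mathscr X,K))\) maps to zero in \(H^n(R\Gamma(\mathscr X,K))\). Then \(i(x)\) maps to zero in \(H^n\) on \(Y\), because the square commutes. Degeneration on \(Y\) gives injectivity there, so \(i(x)=0\). Hence \(x=r(i(x))=0\). So injectivity holds on \(\mathscr X\) for all \(p\) and \(n\), and the Kontsevich spectral sequence on \(\mathscr X\) degenerates at \(E_1\).

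The only delicate point is the equivalence between \(E_1\)-degeneration and injectivity of \(H^n(F^p)\to H^n\) for all \(p\). This needs finiteness of the filtration, which holds because the stupid filtration has finite length. It also needs the spectral sequence to be the standard one for a filtered complex. If I want to avoid quoting this equivalence, I will argue with differentials directly. The maps \(i\) and \(r\) induce morphisms of spectral sequences, and \(r\circ i\) is the identity on each page. Every \(d_k\) with \(k\geq1\) on \(Y\) vanishes. On \(E_1\), naturality gives \(i\,d_1=d_1\,i=0\), and \(i\) is injective on \(E_1\) because it has left inverse \(r\). So \(d_1=0\) on \(\mathscr X\), and \(i\) on \(E_2=E_1\) is again split injective. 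Induction on the page then kills every \(d_k\). This is the argument I would actually write, since it uses only functoriality of spectral sequences of filtered complexes with respect to filtered maps. The real input, the existence of the filtered retraction, was already done in \cref{sec:trace}.
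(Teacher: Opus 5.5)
Your proposal is correct and follows the paper's route. The filtered retraction of \cref{cor:global-filtered-retraction} induces morphisms of spectral sequences with \(r_s i_s=\id\) on every page. The paper then writes \(d_s^{\mathscr X}=r_s\,d_s^Y\,i_s=0\), which is your injectivity argument without the induction; the induction is harmless but unnecessary, since \(r_s i_s=\id\) holds on every page directly by functoriality.
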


\begin{proof}
It is enough to work on one connected component of \(\mathscr X\).
By \cref{cor:global-filtered-retraction}, the filtered complex on
\(\mathscr X\) is a retract of the filtered complex on \(Y\).
Functoriality gives maps on every page
\[
 E_s(\mathscr X,\alpha)
 \xrightarrow{i_s}
 E_s(Y,\alpha)
 \xrightarrow{r_s}
 E_s(\mathscr X,\alpha).
\]
They commute with \(d_s\), and \(r_si_s=\id\).  Therefore, for every
\(s\geq1\),
\begin{equation}
 d_s^{\mathscr X}
 =r_s\,d_s^Y\,i_s.
 \label{eq:pagewise-descent}
\end{equation}
By assumption, \(d_s^Y=0\) for every \(s\geq1\).  Hence
\(d_s^{\mathscr X}=0\) for every \(s\geq1\).  This proves
\(E_1\)-degeneration on \(\mathscr X\).
\end{proof}

\subsection{The two main theorems}

\begin{proof}[Proof of \Cref{thm:intro-projective}]
By \cref{thm:projective-adapted-cover}, choose a finite flat
surjective map
\[
 q:Y\longrightarrow\mathscr X
\]
such that \(Y\) is smooth and projective and
\(D_Y=(q^{-1}\mathscr D)_{\red}\) is SNC.  Put
\(g=\bar w\circ q\).  Then \(g:Y\to\Pj^1\) is a morphism and
\(g(Y\setminus D_Y)\subseteq\A^1\).  The Esnault--Sabbah--Yu theorem,
\cref{thm:esy}, gives \(E_1\)-degeneration on \(Y\).  The result now
follows from \cref{thm:finite-descent}.
\end{proof}

\begin{proof}[Proof of \Cref{thm:intro-rational}]
Fix \(\alpha\in\Q\cap[0,1)\).  By
\cref{thm:projective-anchor}, choose a projective good
compactification \(\mathscr X_0\).  Its fixed-\(\alpha\) Kontsevich
spectral sequence degenerates by \cref{thm:intro-projective}.

Let \(\mathscr X\) be the given NC rational stack compactification.
A good compactification is a special case of such a model.  Hence
\cref{thm:compactification-comparison} identifies the
fixed-\(\alpha\) spectral sequences on \(\mathscr X_0\) and
\(\mathscr X\) from \(E_1\).  The one on \(\mathscr X\) therefore
degenerates at \(E_1\).

Finally, \cref{thm:kontsevich-yu} gives \(E_1\)-degeneration for the
Yu integer slices on \(\mathscr X_0\).  The Yu part of
\cref{thm:compactification-comparison} transfers this result to
\(\mathscr X\).
\end{proof}
 
\providecommand{\bysame}{\leavevmode\hbox to3em{\hrulefill}\thinspace}
\providecommand{\MR}{\relax\ifhmode\unskip\space\fi MR }
\providecommand{\MRhref}[2]{\href{http://www.ams.org/mathscinet-getitem?mr=#1}{#2}
}
\providecommand{\href}[2]{#2}

\end{document}